\theoremstyle{definition}
\newtheorem{ccounter}{ccounter}[section]
\newtheorem{theorem}[ccounter]{Theorem}
\newtheorem{lemma}[ccounter]{Lemma}
\newtheorem{corollary}[ccounter]{Corollary}
\newtheorem{proposition}[ccounter]{Proposition}
\newtheorem{definition}[ccounter]{Definition}
\newtheorem{remark}[ccounter]{Remark}
\numberwithin{equation}{section}
\DeclareMathOperator*{\argmin}{arg\,min}
\newcommand\C{\mathbb{C}}
\newcommand\E{\mathbb{E}}
\newcommand\N{\mathbb{N}}
\renewcommand\H{\mathbb{H}}
\renewcommand\P{\mathbb{P}}
\newcommand\R{\mathbb{R}}
\newcommand\be{\begin{equation}}
\newcommand\ee{\end{equation}}
\def\beb{\color{blue}}
\def\eeb{\normalcolor}
\def\Im{\operatorname{Im}}
\def\Re{\operatorname{Re}}
\def\one{{\mathbbm 1}}
\renewcommand{\tilde}{\widetilde}
\newcommand{\pf}{\operatorname{Pf}}
\newcommand{\iu}{\mathrm{i}}
\newcommand{\eps}{\varepsilon}
\newcommand{\bs}{\boldsymbol}
\renewcommand{\phi}{\varphi}
\title[Smallest gaps between eigenvalues of Gaussian matrices]{Smallest gaps between eigenvalues of Real Gaussian matrices}
\author{Patrick Lopatto}
\address{Division of Applied Mathematics\\Brown University\\Providence, Rhode Island 02906}
\email{patrick\_lopatto@brown.edu}
\author{Matthew Meeker}
\address{Division of Applied Mathematics\\Brown University\\Providence, Rhode Island 02906}
\email{matthew\_meeker@brown.edu}
\begin{document}
\begin{abstract}
We consider an $n\times n$ matrix of independent real Gaussian random variables and determine the asymptotic distribution of the smallest gaps between complex eigenvalues.
\end{abstract}
\maketitle

\section{Introduction}
\subsection{Background}\label{s:background}
Random matrices have fascinated mathematicians and physicists for decades due to their connections to 
quantum chaos, number theory, statistics, and numerous other fields. A primary focus is the distribution of gaps between consecutive eigenvalues in random Hermitian matrices, which underpins many of these links.  Substantial empirical evidence indicates that this distribution also arises in the spacings between energy levels of disordered quantum systems and the zeros of the Riemann zeta function, to give just two examples \cite{odlyzko1987distribution,bohigas1984characterization}.

While the distribution of a single eigenvalue gap is now mathematically understood for a wide variety of matrix models, less is known about the smallest and largest gaps. As motivation for their study, we note that the average-case performance of the Toda flow algorithm for diagonalizing a symmetric matrix can be analyzed in terms of the smallest eigenvalue gap of a Gaussian  matrix \cite{ben2013extreme,deift1983ordinary}. Additionally, the correspondence between eigenvalue gaps and spacings of zeta function zeros mentioned previously extends to the largest and smallest spacings in a given interval \cite{ben2013extreme}.

The rigorous study of extremal eigenvalue gaps was initiated by Vinson in 
his 2001 Ph.D.\ dissertation \cite{vinson2001closest}.
Using the method of moments, Vinson determined the asymptotic distribution of the smallest eigenvalue gap for the circular unitary ensemble (CUE), the Gaussian
unitary ensemble (GUE), and unitarily-invariant $\beta$-ensembles. 
For the CUE and GUE, these results were extended by Ben Arous and Bourgade in \cite{ben2013extreme}, where they obtained the joint limiting distributions for the $k$ 
smallest eigenvalue gaps. Instead of the method of moments, they drew on ideas developed by Soshnikov to study the smallest gaps of determinantal point processes \cite{soshnikov2005statistics}. Further, they also determined the asymptotic distribution of largest gaps for the CUE and GUE in the spectral bulk.
The smallest gaps distribution for the Gaussian orthogonal ensemble (GOE) was 
later established by Feng, Tian, and Wei in \cite{feng2019small}. We remark that
determining the asymptotic distribution of the largest gaps for the GOE remains 
an open problem.

All of the matrix models mentioned in the previous paragraph are exactly solvable, in the sense that 
their eigenvalue correlation functions are given by explicit formulas. There has also been significant interest in studying extremal eigenvalue gaps for matrices outside of this class.
In \cite{bourgade2021extreme}, Bourgade studied the distributions of 
smallest and largest gaps for a quite general set of random matrices, the generalized Wigner matrices, under a smoothness assumption on the entry distributions. He showed that these extremal gaps match those of the GOE/GUE (depending on whether the matrix is real symmetric or complex Hermitian). 
Landon, Lopatto, and Marcinek provided an alternative comparison argument for the largest gaps in \cite{landon2020comparison}, which does not require a smoothness hypothesis.
As a consequence of these universality results and the works on the GOE/GUE mentioned previously, the largest gaps distribution is known for all Hermitian generalized Wigner matrices, and the smallest gaps distribution is known for generalized Wigner matrices of both symmetry classes with sufficiently smooth entries. 

Further, lower bounds on the smallest gaps were established by Nguyen, Tao, and Vu in 
\cite{nguyen2017random} for Wigner matrices
with arbitrary mean (without a smoothness hypothesis), and for adjacency matrices of random graphs. Lopatto and Luh obtained similar lower bounds  in \cite{lopatto2021tail} for sparse matrices,
including adjacency matrices for sparse random graphs.
Feng and Wei studied the smallest gaps for the circular $\beta$-ensemble, a generalization
of the CUE, for all positive integer $\beta$ in \cite{feng2021small}.
In \cite{figalli2016universality}, Figalli and Guionnet extended the results of
Ben Arous and Bourgade from \cite{ben2013extreme} to a several-matrix model.

So far, we have discussed only models with one-dimensional spectra, with eigenvalues lying on either the real line or the unit circle.
In \cite{shi2012smallest}, Shi and Jiang studied
the distribution of the smallest gaps for the complex Ginibre ensemble, a matrix of independent complex Gaussian random variables, whose spectrum is asymptotically supported in the unit disk in the complex plane. To the best of our knowledge, this is the only previous
work identifying the asymptotic distribution of extremal gaps for an ensemble possessing
a two-dimensional spectrum. (Shi and Jiang also consider Wishart matrices and unitarily-invariant $\beta$-ensembles, which have one-dimensional spectra.)

In \cite{ge2017eigenvalue}, Ge proved a high-probability lower bound on the size of the smallest eigenvalue gap of any $n\times n$ random matrix with independent and identically distributed entries, subject to a mild regularity condition on the the entry distribution (that is satisfied, for example, by all distributions with finite variance). Luh and O'Rourke proved a stronger lower bound in \cite{luh2021eigenvectors} and deduced that when such a matrix has sub-Gaussian entries, the probability it does not have simple spectrum decays exponentially in $n$. 

High-probability lower bounds for the smallest particle gap in a two-dimensional Coulomb gas were proved by Ameur in \cite{ameur2018repulsion}, and by Ameur and Romero in \cite{ameur2023planar}. In \cite{thoma2022overcrowding}, Thoma studied the smallest gap for Coulomb gases in two and three dimensions, providing upper and lower bounds and proving asymptotic tightness. We remark that Thoma's lower bound in two dimensions improves on those in \cite{ameur2018repulsion, ameur2023planar}, and that he proves many other results on the separation of particles that hold in arbitrary dimension.

In this work, we consider the smallest gaps distribution for the real Ginibre ensemble, a matrix of independent real Gaussian random variables, whose spectrum is also asymptotically supported on the unit disk (but with a non-zero probability of real eigenvalues, unlike its complex counterpart). Due to the strong correlations between eigenvalues, we find that the smallest gaps between complex eigenvalues are of order $n^{-3/4}$. This contrasts with the smallest gap between $n$ independent points in the disk, which is of order $n^{-1}$, and confirms the general principle that random matrix eigenvalues act as mutually repelling particles whose interactions suppress their fluctuations (relative to those of independent particles).

The methodology of Shi and Jiang in \cite{shi2012smallest} was based on the fact that the eigenvalues of a complex Ginibre matrix form a determinantal point process, which makes the study of their smallest gaps amenable to the techniques developed in \cite{ben2013extreme, soshnikov2005statistics}. However, the eigenvalues of real Ginibre matrices instead have a Pfaffian structure, which makes understanding their smallest gaps substantially more complicated. This work is therefore the first to determine the smallest gaps distribution for a two-dimensional ensemble without determinantal correlation functions.



\subsection{Main Result}
We begin by introducing some concepts necessary to precisely state our main result. When possible, our notation and definitions are chosen to match the previous works \cite{shi2012smallest,goel2023central}, for consistency with the existing literature.

\begin{definition}
For all $n\in \N$, let $G_n = (g_{ij} )_{1\le i,j \le n}$ denote a $n\times n$ random matrix whose entries are independent Gaussian random variables such that $\E[g_{ij}] = 0$ and $\E[ g_{ij}^2] = 1$ for all $i,j$. 
The matrix $G_n$ is called the \emph{real Ginibre matrix} (GinOE) of dimension $n$. We also define $W_n = n^{-1/2} G_n$. 
\end{definition}

We refer the reader to \cite{byun2022progress}, \cite{byun2023progress}, and 
\cite{forrester2010log} for more information about the GinOE
and the related unitary and symplectic Ginibre ensembles.

Next, we recall two well-known facts about the spectrum of $W_n$. First, in the limit as $n$ tends to infinity, the empirical spectral distribution becomes uniformly distributed on the unit disk $\mathbb{D} = \{ z \in \mathbb{C} : |z| < 1 \}$ \cite{bordenave2012around}. Second, the non-real eigenvalues of $W_n$ come in conjugate pairs, since $W_n$ has real entries. This means that if $\lambda$ is an eigenvalue of $W_n$ with non-zero imaginary part, then $\bar \lambda$ is also an eigenvalue of $W_n$, and the eigenvalues in the upper half-plane $\mathbb{H} = \{ z \in \mathbb{C} :  \Im z > 0  \}$ completely determine those in the lower half-plane. 

Given these  considerations, we restrict our attention to the eigenvalues of $W_n$ lying in a given domain contained in the upper half disk \[\mathbb{D}^+ = \{ z \in \mathbb{C} : |z| < 1, \Im z > 0  \},\] and study the asymptotic distribution of the smallest gaps among these eigenvalues.\footnote{We recall that a domain is is a non-empty connected open subset of $\mathbb{C}$.} Further, to avoid boundary effects, we will consider only domains at a positive distance from the boundary of $\mathbb{D}^+$. This restriction leads to the following definition.
\begin{definition}
A domain $\Omega$ is called \emph{admissible} if   $\overline{\Omega} \subset \mathbb{D}^+$. 
\end{definition}

We will derive the distribution of the smallest gaps from a more general result about the convergence of a certain point process built from these gaps to a Poisson limit.
To define this process, we begin with the definition of an order on points of  $\C$. 
\begin{definition}
For $z_1, z_2 \in \mathbb{C}$, we say that $z_1 \prec z_2$ if $\Im(z_1) < \Im(z_2)$, or if $\Im(z_1) = \Im(z_2)$ and $\Re(z_1) < \Re(z_2)$.
\end{definition}
Let $\{\lambda_i\}_{i=1}^n$ denote the eigenvalues of $W_n$, indexed so that $\lambda_1 \prec \dots \prec \lambda_n$ when all eigenvalues are distinct. On the measure zero set where the eigenvalues are not distinct, we label the eigenvalues in the same way, except we break ties between equal eigenvalues arbitrarily.

\begin{definition}
Let $\Omega$ denote an admissible domain and set $\R^+ = [0,\infty)$. 
We define a point process $\chi^{(n)}_\Omega$ on $\R^+$ as follows. First, for all $i \in \N$ such that $1 \le i \le n$, we define 
\begin{equation}
i^* = \argmin_{ j\neq i} \big\{
| \lambda_j - \lambda_i| : \lambda_i \prec \lambda_j , \lambda_j \in \Omega
\big\}
\end{equation}
if the set of indices $j$ such that $j\neq i$ and $\lambda_j \in \Omega$ is nonempty. Otherwise, we set $i^* = 0$. We then set
\begin{equation}
\chi_{\Omega}^{(n)} = \sum_{i : \lambda_i \in \Omega} \delta_{n^{3/4} | \lambda_{i^*} - \lambda_i|} \one_{ \{ {i^*} \neq 0 \}}.
\end{equation}
\end{definition}
\begin{remark}
As noted in \cite[Remark 1.1]{shi2012smallest}, the point of introducing the order $\prec$ in the definition of $i^*$ is to prevent the duplication of gaps in $\chi_{\Omega}^{(n)}$, to ensure good limiting behavior. For example, if $i^*$ were defined as the index minimizing $|\lambda_i - \lambda_{i^*}|$ (without the order condition), then the smallest gap would appear twice in the set $\{ | \lambda_i - \lambda_{i^*} | :  1\le i \le n\}$, and $\chi_{\Omega}^{(n)}$ could not converge to a Poisson process with an absolutely continuous intensity measure.
\end{remark}

We now state our main theorem on the Poisson convergence of $\chi_{\Omega}^{(n)}$.
For every set $S \subset \C$, let $|S|$ denote the Lebesgue measure of $C$. 

\begin{theorem}\label{t:main}
Let $\Omega$ be an admissible domain.
As $n\rightarrow \infty$, the processes $\chi^{(n)}_{\Omega}$ converge weakly to a Poisson point process $\chi_{\Omega}$ on $\R^+ $ with intensity
\begin{equation}\label{lastclaim}
\E\big[ \chi_\Omega( A)\big]
= \frac{ | \Omega |}{\pi} \int_A r^3\, dr
\end{equation}
for any bounded Borel set $A \subset \R^+$.
\end{theorem}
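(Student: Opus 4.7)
My plan is to establish Theorem~\ref{t:main} via the method of factorial moments, following the strategy of Soshnikov \cite{soshnikov2005statistics} and Ben~Arous--Bourgade \cite{ben2013extreme} for determinantal ensembles, as adapted by Shi and Jiang to the complex Ginibre ensemble \cite{shi2012smallest}. A simple point process on $\R^+$ is Poisson with intensity measure $\mu$ if and only if, for every bounded Borel set $A\subset\R^+$ and every integer $k\ge 1$, its $k$-th factorial moment at $A$ equals $\mu(A)^k$. Modulo a routine first-moment tightness bound, it therefore suffices to verify
\begin{equation}\label{goal}
\E\big[\chi_{\Omega}^{(n)}(A)\big(\chi_{\Omega}^{(n)}(A)-1\big)\cdots\big(\chi_{\Omega}^{(n)}(A)-k+1\big)\big]\xrightarrow{n\to\infty}\left(\frac{|\Omega|}{\pi}\int_A r^3\, dr\right)^k
\end{equation}
for every such $A$ and $k$.

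The first step is to rewrite the left-hand side of \eqref{goal} as an integral against the $m$-point correlation functions $\rho_m$ of the complex eigenvalues of $W_n$. Expanding the falling factorial as a sum over ordered $k$-tuples of distinct indices $(i_1,\dots,i_k)$ with $\lambda_{i_j}\in\Omega$ and $i_j^*\neq 0$, and removing the minimality condition defining each $i_j^*$ by inclusion--exclusion, produces an expression of the schematic form
\begin{equation}
\sum_{\ell\ge 0}\frac{(-1)^\ell}{\ell!}\int_{\Omega^k\times\C^k\times\C^\ell}\rho_{2k+\ell}(\bs{z},\bs{w},\bs{u})\,\Phi_{k,A}(\bs{z},\bs{w})\,\Psi_{k,\ell}(\bs{z},\bs{w},\bs{u})\,d\bs{z}\,d\bs{w}\,d\bs{u},
\end{equation}
where $\Phi_{k,A}$ enforces $z_j\prec w_j$ and $n^{3/4}|z_j-w_j|\in A$ for each $j$, while $\Psi_{k,\ell}$ enforces that every auxiliary point $u_m$ lies strictly farther from some $z_j$ than the corresponding $w_j$ does. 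The $\ell=0$ term will supply the Poisson limit, and the $\ell\ge 1$ terms must be shown negligible.

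The second step is to exploit the Pfaffian structure of the GinOE complex eigenvalues. These correlation functions admit the representation $\rho_m(z_1,\dots,z_m)=\pf\big[K_n(z_i,z_j)\big]_{i,j=1}^m$ for an explicit $2\times 2$ antisymmetric matrix kernel $K_n$ built from scaled complementary error functions; see \cite{byun2022progress,forrester2010log}. Under the bulk rescaling $z=z_0+n^{-1/2}u$ about any $z_0\in\overline\Omega$, the kernel $K_n$ converges to a translation-invariant limit, and a direct expansion shows that the resulting local $2$-point function vanishes quadratically in $|z_1-z_2|$. Combined with integration of the area element in polar coordinates (halved to account for the ordering $\prec$), this produces the intensity $(|\Omega|/\pi)\,r^3\,dr$ in the base case $k=1$.

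The main technical obstacle I anticipate is establishing the cluster factorization
\begin{equation}
\rho_{2k}(z_1,w_1,\dots,z_k,w_k)\approx\prod_{j=1}^{k}\rho_2(z_j,w_j)
\end{equation}
uniformly over configurations in which the $k$ pairs $(z_j,w_j)$ are separated at scale at least $n^{-1/2+\varepsilon}$ for some fixed $\varepsilon>0$ while each pair is tight at scale $n^{-3/4}$, together with the negligibility of the $\ell\ge 1$ inclusion--exclusion terms. In the determinantal setting of \cite{ben2013extreme,shi2012smallest}, the corresponding factorization follows from Hadamard's inequality and rapid off-diagonal decay of the scalar kernel, but here one must expand the Pfaffian of the $2k\times 2k$ block kernel matrix as a signed sum over perfect matchings of its $4k$ rows, identify the matching that pairs each $z_j$ with its corresponding $w_j$ as dominant, and bound every competing matching by showing it must contain at least one ``cross factor'' $K_n(\cdot,\cdot)$ linking points from distinct clusters. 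The Gaussian-type off-diagonal decay of $K_n$ on scale $n^{-1/2}$, established uniformly on $\overline\Omega$ from the explicit kernel formula, should make each such cross factor small enough that, after summation over matchings and integration over cluster configurations, only the diagonal matching survives in the limit, yielding the desired factorial-moment formula.
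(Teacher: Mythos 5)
Your overall architecture (factorial moments, inclusion--exclusion to remove the minimality condition, local asymptotics of the kernel producing the quadratic vanishing and hence the $r^3\,dr$ intensity) matches the paper's, but there is a genuine gap at exactly the point you flag as "the main technical obstacle," and your proposed fix does not work. The problem is your plan to bound correlation functions by expanding the Pfaffian over perfect matchings and exploiting off-diagonal decay. First, the auxiliary points $u_1,\dots,u_\ell$ in the inclusion--exclusion sum all lie within $O(n^{-3/4})$ of some $z_j$, so for the overwhelming majority of kernel entries appearing in $\rho_{2k+\ell}$ there is no off-diagonal decay to exploit; each entry is only $O(n)$ after rescaling. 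Second, the matching expansion of a $2m\times 2m$ Pfaffian has $(2m-1)!!$ terms, so the naive bound it yields on $\rho_{2k+\ell}$ grows like $(4k+2\ell-1)!!\,C^{2k+\ell}n^{2k+\ell}$; the $1/\ell!$ prefactor and the volume factor $O(n^{-3\ell/2})$ do not absorb this uniformly in $\ell$ (and $\ell$ ranges up to order $n$), so the tail of the inclusion--exclusion series is not controlled. In the determinantal GinUE setting this is exactly where positive-definiteness of the kernel enters, via the Fischer/Hadamard bound $\det M\le\det M_{\mathcal I}\det M_{\mathcal I^c}$, which gives $\rho_m\le\prod\rho_1$ with no combinatorial blow-up; you acknowledge this but offer no substitute for the Pfaffian setting.

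The paper's resolution, which is the real content of the argument and is absent from your proposal, has two parts. First, in the bulk the off-diagonal blocks $D_n,I_n$ of the Pfaffian kernel are uniformly $O(e^{-cn})$, so by the identity $\pf(M)=\det(\tilde M)$ for checkerboard-supported skew-symmetric $M$ the correlation functions reduce, up to exponentially small error, to determinants of the matrix $Q^{(m)}=(S_n(\sqrt n z_i,\sqrt n z_j))_{i,j}$; this removes the matching combinatorics entirely. Second -- and this is the key new difficulty relative to GinUE -- the matrix $Q^{(m)}$ is not Hermitian, so Fischer's inequality is not directly available. The paper proves that $Q^{(m)}$ is positive definite (or its determinant is exponentially small) for all $(z_1,\dots,z_m)$ outside a set of exponentially small Lebesgue measure, and then runs the Fischer bound off that exceptional set while separately controlling the exceptional set's contribution via crude polynomial bounds on the correlation functions. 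Without this step (or some replacement for it), both the negligibility of the $\ell\ge 1$ terms and the uniform integrability needed near the diagonal (configurations where distinct clusters collide, which your cross-factor argument does not cover since the decay is only effective at separations $\gg n^{-1/2}$) remain unproven.
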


For the next corollary, we let 
\be 
t_1^{(n)} = \min \{ | \lambda_i  - \lambda_j| :  \lambda_i \in \Omega \land \lambda_j \in \Omega \land i < j   \}
\ee
denote the smallest gap between eigenvalues in $\Omega$, and more generally we let $t_\ell^{(n)}$ denote the $\ell$-th smallest value in this set. We then define the rescaled gaps
\be 
\omega^{(n)}_\ell = n^{3/4} \left(\frac{\pi }{4 | \Omega| }\right)^{1/4} t_\ell^{(n)}.
\ee
\begin{corollary}\label{c:maincor}
For any real numbers $0< x_1 < y_1 < \dots < x_k < y_k$, we have 
\[
\lim_{n\rightarrow \infty}
\P(x_\ell <  \omega^{(n)}_{\ell} < y_\ell \text{ for all } 1 \le \ell \le k )
 = \left( e^{-x^4_k} - e^{-y_k^4} \right) \prod_{\ell=1}^{k-1} ( y_\ell^4 - x_\ell^4).
\]
\end{corollary}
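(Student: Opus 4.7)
The plan is to deduce the corollary from Theorem \ref{t:main} by (i) identifying $n^{3/4} t^{(n)}_\ell$ with the $\ell$-th smallest atom of $\chi^{(n)}_\Omega$ up to events of vanishing probability, and (ii) computing the joint distribution of the first $k$ order statistics of the rescaled Poisson limit. For (i), each atom of $\chi^{(n)}_\Omega$ is a pairwise gap between eigenvalues in $\Omega$, hence also an atom of the augmented process
\[
\tilde \chi^{(n)}_\Omega = \sum_{\substack{i < j \\ \lambda_i,\lambda_j \in \Omega}} \delta_{n^{3/4} |\lambda_i - \lambda_j|},
\]
whose $\ell$-th smallest atom is exactly $n^{3/4} t^{(n)}_\ell$. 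An atom of $\tilde \chi^{(n)}_\Omega$ that is \emph{not} an atom of $\chi^{(n)}_\Omega$ must come from a pair $(\lambda_i, \lambda_j)$ with $\lambda_i \prec \lambda_j$ such that some third eigenvalue $\lambda_k \in \Omega$ with $\lambda_i \prec \lambda_k$ is strictly closer to $\lambda_i$ than $\lambda_j$ is; if this atom is at most $M$, the triangle inequality forces three eigenvalues of $W_n$ in $\Omega$ to lie within mutual distance $2 M n^{-3/4}$.

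The main obstacle is therefore to show that, for every fixed $M > 0$, the probability of such a close triple of eigenvalues in $\Omega$ vanishes as $n \to \infty$. This is a three-point correlation estimate: using the Pfaffian kernel of the complex eigenvalue process of the GinOE and its short-distance asymptotics (the same ingredients entering the proof of Theorem \ref{t:main}), one integrates the three-point intensity over the set of triples in $\Omega^3$ with all pairwise distances at most $2 M n^{-3/4}$. Eigenvalue repulsion forces the integrand to carry additional small factors at the microscopic scale, so the resulting expectation is $o(1)$. Granted this, $\tilde \chi^{(n)}_\Omega$ and $\chi^{(n)}_\Omega$ agree on every compact subinterval of $\R^+$ with probability tending to one, and Theorem \ref{t:main} then implies that the $k$-tuple $\bigl(n^{3/4} t_1^{(n)}, \dots, n^{3/4} t_k^{(n)}\bigr)$ converges jointly in distribution to the $k$ smallest atoms $(r_1, \dots, r_k)$ of the Poisson limit.

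For step (ii), the first $k$ order statistics of a Poisson point process on $\R^+$ with intensity $f(r)\, dr$ and cumulative $F(y) = \int_0^y f(r) \, dr$ have joint density $\prod_{\ell=1}^k f(s_\ell) \cdot e^{-F(s_k)}$ on $\{0 < s_1 < \dots < s_k\}$. When $x_1 < y_1 < \dots < x_k < y_k$, the ordering is automatic, and integration over $\prod_{\ell=1}^k (x_\ell, y_\ell)$ yields
\[
\prod_{\ell=1}^{k-1} \bigl(F(y_\ell) - F(x_\ell)\bigr) \cdot \bigl(e^{-F(x_k)} - e^{-F(y_k)}\bigr),
\]
the last factor coming from $\int_{x_k}^{y_k} f(s) e^{-F(s)} \, ds$. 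The rescaling used in the definition of $\omega^{(n)}_\ell$ is chosen precisely so that the limiting Poisson intensity has $F(s) = s^4$, which gives the formula claimed in the corollary.
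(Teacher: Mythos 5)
Your proposal is correct and follows essentially the same route as the paper: the identification of the sorted pairwise gaps with the sorted atoms of $\chi^{(n)}_\Omega$ is reduced, exactly as in the paper, to the vanishing of the three-point intensity over triples at mutual distance $O(Mn^{-3/4})$ (this is the paper's \Cref{l:bunching}, whose proof indeed rests on the short-distance kernel asymptotics showing the $3\times 3$ determinant is $O(n^{-1/2})$), combined with tightness of the $k$-th gap from \Cref{t:main} to let $M\to\infty$. Your computation of the limit via the joint density of the first $k$ order statistics is an equivalent reformulation of the paper's computation using independence of Poisson counts on disjoint intervals, and yields the same formula.
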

In particular, the previous result implies that for any fixed $k\in \mathbb{N}$, the rescaled gap $\omega^{(n)}_{k}$ converges in distribution to a random variable with density $p(x) \propto x^{4k-1} e^{-x^4}\, dx$.

\begin{remark}
The limiting intensity measure in \eqref{lastclaim} is identical to the one for the smallest gaps process of the complex Ginibre ensemble (GinUE); see \cite[Theorem 1.1]{shi2012smallest}.\footnote{Consequently, the distributions of the rescaled smallest gaps also match.  We note for the convenience of the reader that there is a misprint in the definition of the rescaled gaps $\tau^{(n)}_\ell$ in \cite[Corollary 1.1]{{shi2012smallest}}.}
\end{remark}

\subsection{Proof Ideas}

As mentioned in \Cref{s:background}, the previous analysis of smallest gaps for the GinUE in
\cite{shi2012smallest} is based on the determinantal structure of the eigenvalue correlation functions, following ideas of \cite{soshnikov2005statistics} and \cite{ben2013extreme}.
Because the GinOE has Pfaffian correlation functions, this
methodology does not immediately apply.
However, it was observed in \cite{goel2023central}
and \cite{kopel2015linear} that in the bulk of the spectrum, these Pfaffian correlation 
functions can be approximated by determinantal ones up to exponentially small 
additive errors.
Using this idea, we are able to place ourselves back in the determinantal framework of \cite{shi2012smallest, soshnikov2005statistics, ben2013extreme}.

After we make the reduction to a determinantal process, we come to the primary obstacle in adapting the strategy used for the smallest gaps of the GinUE in \cite{shi2012smallest}. Their proof relies heavily on the positive-definiteness of the determinantal kernel for the GinUE, while the kernel in our determinantal approximation is not known to be positive-definite. 
To overcome this difficulty, we prove a technical
lemma showing that the approximating kernel is positive-definite (or exponentially small) everywhere in the spectral bulk, except possibly for a set of asymptotically vanishing measure (see \Cref{l:integration}). Using the determinantal approximation in tandem with
this lemma, we are able to complete the proof of our main result by carefully tracking the contribution of the exceptional sets on which the kernel is not positive definite.

It is natural to wonder whether \Cref{t:main} holds for more general domains. Indeed, the analysis of the smallest gaps for the GinUE in \cite{shi2012smallest} holds for any region of the complex plane. The methods here should extend in a relatively straightforward way to any region $\Omega$ such that $\overline{\Omega} \subset \mathbb{H}$; that is, we require a positive distance from the real axis but permit the region to extend beyond the unit circle. Specifically, one can augment the asymptotic analysis of the Pfaffian kernel in \Cref{l:23}, which holds in the interior of the unit disk, with the asymptotics from \cite[Remark 3.4]{kriecherbauer2008locating} and \cite[Theorem B.1]{bleher2006zeros} to access to the entire interior of the upper half-plane. We omit this extension for brevity, since it requires lengthy computations. Generalizing our result to regions that intersect the real line would require a precise accounting of the contributions from the real eigenvalues, and seems more difficult. 

\subsection{Outline}
In \Cref{s:preliminaryresults}, we begin by recalling previous results on the correlation
functions for the GinOE, the Pfaffian, and the determinantal approximation, and state some straightforward consequences of these results. We also recall some facts about convergence to Poisson distributions and processes.
In \Cref{s:main},
we state a three key lemmas and show how they imply our main theorem and its corollary. 
In \Cref{s:proof_main_lemmas}, we prove each of these key lemmas. 
Finally, in \Cref{s:auxiliary}, we prove several auxiliary technical lemmas that we
require at various points throughout this work.

\subsection{Acknowledgments}

P.\ L.\ is supported by NSF postdoctoral fellowship DMS-2202891.

\section{Preliminary Results}\label{s:preliminaryresults}

\subsection{Correlation Functions for the GinOE}
We begin by recalling how to compute (symmetrized) expectations of functions of the eigenvalues $(w_i)_{i=1}^n$  of $G_n$.
Let $\C^* = \C {\setminus} \R$ denote the complex plane with the real line removed, and let $\mathcal{I}_k\subset \{1, \dots , n\}^n$ be the set of pairwise distinct $k$-tuples of indices. 
By \cite[(5.1)]{borodin2009ginibre}, for all $k, n \in \mathbb{N}$, there exists a function  $\rho_k^{(n)}\colon (\C^*)^k \rightarrow \R^+$ such that 
\begin{equation}
\int_{(\C^*)^k } f(z_1,\dots, z_k) 
\rho_k^{(n)} (z_1, \dots, z_k) \,  dz_1\dots dz_k
 = 
 \E 
 \left[
 \sum_{(i_1, \dots, i_k) \in \mathcal{I}_k } f(w_{i_1},\dots, w_{i_k})
 \right]\label{corrdef}
\end{equation}
for every compactly supported, bounded, and 
Borel-measurable function $f\colon (\C^*)^k \rightarrow \R$. We will use the shorthand $\rho_k = \rho_k^{(n)}$, suppressing the $n$-dependence in the notation.
 For more on correlation functions, including their definition for general random point fields, we refer the reader to 
\cite{soshnikov2000determinantal}.
\eeb

The next lemma identifies the correlation function $\rho_k$ as the Pfaffian of certain $2k \times 2k$ matrix. For the reader's convenience, we begin by recalling the definition of a Pfaffian.
\begin{definition}
The Pfaffian of a $2n \times 2n$ skew-symmetric matrix $M=(M_{ij})_{i,j=1}^{2n}$ is defined by
\begin{equation*}
    \pf(M)=\frac{1}{2^n n !} \sum_{\sigma \in S_{2 n}} \operatorname{sgn}(\sigma) \prod_{i=1}^n M_{\sigma(2 i-1), \sigma(2 i)},
\end{equation*}
where $S_{2n}$ is the symmetric group of degree $2n$. 
\end{definition}

The statement below quotes \cite[Theorem 1.1]{goel2023central} exactly, which collected certain results from \cite[Appendix B.3]{mays2012geometrical}. However, we emphasize that these correlation functions were originally identified explicitly in \cite{forrester2007eigenvalue}, and the Pfaffian form below was first derived in the works \cite{borodin2009ginibre,forrester2009method,sinclair2009correlation,sommers2008general}.

\begin{lemma}\label{l:21}
The $k$-point complex--complex correlation functions of the $n$-dimensional real Ginibre ensemble $G_n$ are given by
\be\label{pfcorr}
    \rho_k(z_1, \ldots, z_k)=\pf(K(z_i, z_j))_{1 \leq i, j \leq k},
\ee
where $(K(z_i, z_j))_{1 \leq i, j \leq k}$ is a $2k \times 2k$ matrix composed of the $2\times 2$ blocks
\begin{align*}
    K(z_i, z_j)=\begin{pmatrix}
D_{n}\left(z_i, z_j\right) & S_{n}\left(z_i, z_j\right) \\
-S_{n}\left(z_j, z_i\right) & I_{n}\left(z_i, z_j\right)
\end{pmatrix},
\end{align*}
and $D_n$, $I_n$, and $S_n$ are defined by
\begin{gather*}
S_{n}(z, w)= \frac{\mathrm i e^{-(1 / 2)(z-\bar{w})^2}}{\sqrt{2 \pi}}(\bar{w}-z) G(z, w) s_{n}(z \bar{w}), \\
D_{n}(z, w) =\frac{e^{-(1 / 2)(z-w)^2}}{\sqrt{2 \pi}}(w-z) G(z, w) s_{n}(z w), \\
I_{n}(z, w) =\frac{e^{-(1 / 2)(\bar{z}-\bar{w})^2}}{\sqrt{2 \pi}}(\bar{z}-\bar{w}) G(z, w) s_{n}(\bar{z} \bar{w}),
\end{gather*}
where $z,w \in \C^*$ and 
\begin{gather*}
G(z, w)=\sqrt{\operatorname{erfc}(\sqrt{2} \Im(z)) \operatorname{erfc}(\sqrt{2} \Im(w))},\qquad \operatorname{erfc}(x) = \frac{2}{\sqrt{\pi}} \int_x^\infty \exp( - t^2)\, dt,\\
s_{n}(z)=e^{-z} \sum_{j=0}^{n-1} \frac{z^j}{j !}.
\end{gather*}
\end{lemma}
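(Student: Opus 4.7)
The plan is to cite \cite{goel2023central} directly, since \Cref{l:21} reproduces its Theorem~1.1 verbatim. That result in turn collects derivations scattered across \cite{borodin2009ginibre,forrester2007eigenvalue,forrester2009method,sinclair2009correlation,sommers2008general,mays2012geometrical}. For completeness I outline below the strategy used in those derivations.

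The starting point is the Lehmann--Sommers/Edelman joint eigenvalue density for the real Ginibre ensemble. Because $G_n$ has real entries, its non-real eigenvalues come in complex conjugate pairs, so the density decomposes into sectors indexed by the number $k$ of real eigenvalues (with $n-k$ even). In each sector the density is a Vandermonde-like product over both the real eigenvalues and representatives of each conjugate pair in the upper half-plane, multiplied by a Gaussian weight; an additional $\operatorname{erfc}(\sqrt{2}\,\Im z)$ factor appears for each complex pair after integrating out the orthogonal group of matrices with fixed spectrum. This sector-wise structure, rather than a single product formula as in the GinUE, is the source of the Pfaffian (as opposed to determinantal) correlation functions.

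From the joint density one extracts correlation functions along the standard route for $\beta=1$-type ensembles. First rewrite each Vandermonde product as a single determinant of arbitrary monic polynomials; then apply de Bruijn's identity to express the partition function and $k$-point correlation functions as Pfaffians of a skew-symmetric matrix of pairings $\langle p_i, p_j \rangle$, where the pairing combines contributions from both the real and complex sectors. The key step is to choose the polynomials $p_j$ to form a skew-orthogonal system with respect to this pairing, which collapses the Pfaffian and reduces the $k$-point correlation to $\pf$ of a $2k \times 2k$ matrix built from a $2\times 2$ block kernel. For the GinOE, the skew-orthogonal polynomials turn out to be (up to normalization) the monomials $z^{2j}$ and $z^{2j+1}$, and inserting them produces the truncated exponential $s_n(z\bar w)$ together with the Gaussian and linear prefactors that appear in $S_n, D_n, I_n$.

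The main technical obstacle, once this scheme is in place, is the algebraic simplification of the four kernel entries into the compact forms displayed in the lemma, specifically isolating the Gaussian prefactors $e^{-(1/2)(z-\bar w)^2}$ and the symmetric square root $G(z,w) = \sqrt{\operatorname{erfc}(\sqrt 2\,\Im z)\,\operatorname{erfc}(\sqrt 2\,\Im w)}$ that encode the conjugate-pair structure. Restricting to $z,w \in \C^* = \C \setminus \R$ is a significant simplification: in this regime only the complex-complex block of the kernel is needed, and one avoids the $\delta$-function and derivative terms that arise when one or both arguments lie on the real line. The detailed computation of these blocks is carried out in \cite[Appendix~B.3]{mays2012geometrical}; see also \cite[Section~5]{borodin2009ginibre} and \cite{forrester2009method}.
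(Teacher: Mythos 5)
Your proposal matches the paper exactly: the paper gives no independent proof of this lemma, but simply quotes \cite[Theorem 1.1]{goel2023central} (which in turn collects the computations of \cite[Appendix B.3]{mays2012geometrical} and the original derivations in \cite{borodin2009ginibre,forrester2009method,sinclair2009correlation,sommers2008general,forrester2007eigenvalue}), which is precisely what you do. Your supplementary outline of the skew-orthogonal polynomial derivation is accurate but not required.
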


\begin{remark}\label{r:23}
By a change of variables in the definition \eqref{corrdef}, it follows that the $k$-th correlation function for the complex eigenvalues of $W_n$ is $n^{k} \rho_k(\sqrt{n} z_1, \dots \sqrt{n} z_k)$.
\end{remark}
The following lemma is useful for controlling the correlation functions of $W_n$. Its statement is taken from \cite[Lemma 2.3]{goel2023central}. 

\begin{lemma}\label{l:23}
Let $\Omega$ be an admissible domain, and let $d_\Omega = \inf \{ |z-w| : z \in \Omega, w\in \partial  \mathbb{D}^+ \}$ denote the distance between $\Omega$ and the boundary of $\mathbb{D}^+$. There exist  constants  $C(d_\Omega), c(d_\Omega)>0$ such that  
\begin{gather*}
\sup_{z,w \in \Omega} \big| D_n(\sqrt{n} z,\sqrt{n}w)\big|   \le C e^{-cn},
\qquad \sup_{z,w \in \Omega} \big| I_n(\sqrt{n}z,\sqrt{n}w)\big| \le C e^{-cn},\\
\sup_{z,w \in \Omega} \big| S_n(\sqrt{n}z,\sqrt{n} w)\big|  \le C.
\end{gather*}
\end{lemma}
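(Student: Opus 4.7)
By admissibility, there exists $\delta>0$ such that $\Im z\ge\delta$ and $|z|\le 1-\delta$ for every $z\in\overline\Omega$. The plan is to combine three ingredients: (i) the standard estimate $\operatorname{erfc}(x)\le e^{-x^2}/(\sqrt\pi\,x)$, which gives
\[
G(\sqrt{n}\,z,\sqrt{n}\,w) \le \frac{e^{-n((\Im z)^2 + (\Im w)^2)}}{\sqrt{2\pi n\,\Im z\,\Im w}};
\]
(ii) the truncated-exponential bound
\[
|s_n(n\xi)| \le 1 + \frac{e^{n(\log|\xi| + 1 - \Re\xi)}}{\sqrt{2\pi n}\,(1-|\xi|)},\qquad |\xi|<1,
\]
which follows by writing $s_n(n\xi) = 1 - e^{-n\xi}\sum_{k\ge n}(n\xi)^k/k!$, bounding the tail by a geometric series, and invoking Stirling; this applies to $\xi=zw$, $\xi=z\bar w$, and $\xi=\bar z\bar w$, since on $\overline\Omega$ each satisfies $|\xi|\le(1-\delta)^2<1$; and (iii) the identity $|e^{-(n/2)(z-w)^2}|=e^{-(n/2)\Re(z-w)^2}$ with $\Re(z-w)^2=(\Re z-\Re w)^2-(\Im z-\Im w)^2$ (and its analogue for $z-\bar w$).

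I would then insert the splitting $s_n = 1 + R_n$ into each kernel entry and analyze the two pieces separately. For the ``$1$'' piece, combining $G$ with the Gaussian factor and expanding real and imaginary parts, the combined exponent collapses to $-\tfrac{n}{2}|z-\bar w|^2$ for $D_n$ and $I_n$, and to $-\tfrac{n}{2}|z-w|^2$ for $S_n$; the $\sqrt n$ from the prefactor $\sqrt n(w-z)$ or $\sqrt n(\bar w-z)$ cancels the $1/\sqrt n$ from $G$. Since $|z-\bar w|\ge\Im z+\Im w\ge 2\delta$ uniformly on $\overline\Omega$, the ``$1$'' piece of $D_n$ and $I_n$ is $\le Ce^{-2n\delta^2}$, while the ``$1$'' piece of $S_n$ is only $O(1)$ (as $z=w$ is not excluded). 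For the ``$R_n$'' piece, a parallel expansion with the extra contribution $n\psi(\xi)=n(\log|\xi|+1-\Re\xi)$ produces a systematic cancellation: in all three cases, every cross term involving $\Re z\Re w$ or $\Im z\Im w$ drops out, and the exponent collapses to $n\bigl[g(|z|)+g(|w|)\bigr]$, where
\[
g(r) = -\tfrac{r^2}{2}+\log r+\tfrac{1}{2}.
\]
Since $g(1)=0$ and $g'(r)=(1-r^2)/r>0$ on $(0,1)$, one has $g(r)\le g(1-\delta)<0$ on $\overline\Omega$, so the ``$R_n$'' piece is bounded by $Ce^{2ng(1-\delta)}/\sqrt n$ in each of the three cases. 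Summing the two pieces yields all three bounds with $c=\min(2\delta^2,\,-2g(1-\delta))>0$ and a constant $C$ depending on $\delta$, and hence on $d_\Omega$.

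The main obstacle is not conceptual but algebraic: one must carefully expand $-\tfrac{n}{2}\Re(z-w)^2$ (or $-\tfrac{n}{2}\Re(z-\bar w)^2$), the $G$-exponent $-n((\Im z)^2+(\Im w)^2)$, and $n\psi(\xi)$ for each of $\xi=zw,\,z\bar w,\,\bar z\bar w$, and verify that in every case the cross terms cancel to leave the purely radial combination $g(|z|)+g(|w|)$. The inequality $g(r)<0$ for $r<1$, which is essentially the statement $1-r+\log r<0$ underlying the Szegő-type exponential decay of the truncated exponential series, is precisely what ties admissibility of $\Omega$ to the exponential decay of $D_n$ and $I_n$, and underlies the reduction of the Pfaffian kernel to a determinantal one in the bulk.
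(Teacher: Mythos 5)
The paper does not prove this lemma; it imports it verbatim from \cite[Lemma 2.3]{goel2023central}, so there is no internal proof to compare against. Your argument is a correct, self-contained verification, and it follows the route one would expect from the computations the paper does carry out elsewhere. I checked the two cancellations you flag as the main algebraic burden, and both work: combining $-\tfrac{n}{2}\Re(z-w)^2$ with the $G$-exponent $-n((\Im z)^2+(\Im w)^2)$ gives exactly $-\tfrac{n}{2}|z-\bar w|^2$ for $D_n$ and $I_n$ (bounded above by $-2n\delta^2$ since $|z-\bar w|\ge \Im z+\Im w\ge 2\delta$) and $-\tfrac{n}{2}|z-w|^2$ for $S_n$ (only nonpositive, whence $O(1)$); and adding $n(\log|\xi|+1-\Re\xi)$ for $\xi=zw$, $z\bar w$, or $\bar z\bar w$ indeed kills all cross terms and leaves $n\bigl(g(|z|)+g(|w|)\bigr)$ with $g(r)=-r^2/2+\log r+1/2$, which is $\le g(1-\delta)<0$ on $\overline\Omega$ by monotonicity of $g$ on $(0,1)$. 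Your elementary tail bound on $s_n(n\xi)$ (geometric series plus Stirling) is a valid substitute for the sharper asymptotic \cite[Lemma 4.1]{goel2023central} that this paper invokes in its proof of \Cref{l:close}; note that your exponent $2g(|z|)$ is precisely the quantity $1-|z|^2+\log|z|^2$ whose negativity is the key point there as well. One small bookkeeping remark: to get constants depending only on $d_\Omega$ as stated, observe that $d_\Omega\le \Im z$ and $d_\Omega\le 1-|z|$ for $z\in\Omega$, so you may simply take $\delta=d_\Omega$.
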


The statement of the next lemma is from \cite[Lemma 2.4]{goel2023central}. It was proved in \cite[Appendix B]{gebert2019pure} (and appeared earlier in \cite{forrester2008skew}).
\begin{lemma}\label{l:24}
Let $M=(M_{ij})_{i,j=1}^{2n}$ be a skew-symmetric $2n \times 2n$ matrix such that $M_{ij} = 0$ for every pair of indices $(i,j)$ such that $i \equiv j \bmod 2$. Let $\tilde M = (\tilde M)_{i,j=1}^n $ be the $n\times n$ matrix defined by $\tilde M_{ij} = M_{2i-1, 2j}$. 
Then $\pf(M) = \det (\tilde M)$.
\end{lemma}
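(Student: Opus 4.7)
The plan is to conjugate $M$ by a suitable permutation that brings it into block antidiagonal form, and then invoke two standard Pfaffian identities.

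First, I would introduce the permutation $\pi \in S_{2n}$ defined by $\pi(2k-1) = k$ and $\pi(2k) = n+k$ for $1 \le k \le n$, which collects the odd-indexed rows and columns at positions $1, \ldots, n$ and the even-indexed ones at positions $n+1, \ldots, 2n$. Letting $P$ be the corresponding permutation matrix, the conjugated matrix $M' = PMP^T$ satisfies $M'_{ij} = M_{\pi^{-1}(i),\, \pi^{-1}(j)}$. The hypothesis that $M_{ij} = 0$ whenever $i \equiv j \bmod 2$ forces the upper-left and lower-right $n \times n$ diagonal blocks of $M'$ to vanish, and skew-symmetry of $M$ determines the lower-left block, so
\[
M' = \begin{pmatrix} 0 & \tilde M \\ -\tilde M^T & 0 \end{pmatrix},
\]
where $\tilde M_{ij} = M_{2i-1,\, 2j}$ is precisely the matrix in the statement.

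Next, I would apply two standard Pfaffian identities: the conjugation rule $\pf(PMP^T) = \mathrm{sgn}(\pi)\, \pf(M)$ for any permutation matrix $P$, and the block formula
\[
\pf \begin{pmatrix} 0 & A \\ -A^T & 0 \end{pmatrix} = (-1)^{n(n-1)/2} \det(A)
\]
for any $n \times n$ matrix $A$. The latter can be established by squaring both sides (using $\pf(M')^2 = \det(M')$) and pinning down the sign via the special case $A = I$, or directly from the matching-sum expansion of the Pfaffian. Combining the two identities yields $\mathrm{sgn}(\pi)\, \pf(M) = (-1)^{n(n-1)/2} \det(\tilde M)$.

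The remaining task is to compute $\mathrm{sgn}(\pi)$. The inverse permutation $\pi^{-1}$ has one-line notation $(1, 3, 5, \ldots, 2n-1,\, 2, 4, \ldots, 2n)$, a riffle shuffle whose two halves are each internally sorted. Counting inversions between the halves, the first-half position $i$ (holding the value $2i-1$) contributes exactly $i-1$ inversions, for a total of $\sum_{i=1}^n (i-1) = n(n-1)/2$. Hence $\mathrm{sgn}(\pi) = (-1)^{n(n-1)/2}$, the two sign factors cancel, and we conclude $\pf(M) = \det(\tilde M)$. The main (and really only) obstacle here is bookkeeping: making sure the sign of the riffle shuffle $\pi$ and the sign in the block-Pfaffian formula agree so that no residual sign survives.
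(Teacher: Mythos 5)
Your proof is correct. There is nothing in the paper to compare it against: the paper does not prove this lemma, but quotes it from \cite[Lemma 2.4]{goel2023central} and points to \cite[Appendix B]{gebert2019pure} (and \cite{forrester2008skew}) for a proof, so your argument serves as a complete, self-contained substitute. The bookkeeping checks out: conjugating by the interleaving permutation $\pi$ (with $\pi(2k-1)=k$, $\pi(2k)=n+k$) does put $M$ into the block form $\begin{pmatrix} 0 & \tilde M \\ -\tilde M^T & 0 \end{pmatrix}$, the inversion count $n(n-1)/2$ for the riffle shuffle is right, and it exactly cancels the sign in the block-Pfaffian identity, yielding $\pf(M)=\det(\tilde M)$ (consistent with the easy checks $n=1,2$). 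The only step deserving one more sentence of care is fixing the sign in $\pf\begin{pmatrix} 0 & A \\ -A^T & 0 \end{pmatrix} = (-1)^{n(n-1)/2}\det(A)$ by ``squaring and checking $A=I$'': squaring only gives $\pf = \pm\det(A)$ pointwise, so to transfer the sign from $A=I$ to general $A$ you should observe that both sides are polynomials in the entries of $A$ whose squares agree (so the sign is constant, e.g.\ because $\det$ is an irreducible polynomial), or argue for invertible $A$ via the congruence $\begin{pmatrix} 0 & A \\ -A^T & 0 \end{pmatrix} = \begin{pmatrix} A & 0 \\ 0 & I \end{pmatrix}\begin{pmatrix} 0 & I \\ -I & 0 \end{pmatrix}\begin{pmatrix} A & 0 \\ 0 & I \end{pmatrix}^T$ together with $\pf(BMB^T)=\det(B)\pf(M)$, and then conclude by density/continuity; your alternative route through the matching-sum expansion of the Pfaffian avoids the issue entirely.
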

For all $k\in \mathbb{N}$, define the $k\times k$ matrix $Q^{(k)}(z_1,\dots , z_k) =(S_n( \sqrt{n} z_i, \sqrt{n} z_j))_{1 \leq i, j \leq k}$. The following lemma provides a useful approximation of the correlation functions of $W_n$ by a determinant (see \Cref{r:23}). It follows from combining \Cref{l:21},  \Cref{l:23}, and \Cref{l:24}; we provide a detailed proof in \Cref{s:auxiliary}.

\begin{lemma}\label{l:detapprox}
Fix $k\in \N$. 
For every admissible domain $\Omega$, there exists a constant $c(k,d_{\Omega}) > 0$ such that for all $n\in \N$,
\begin{equation*}
\sup_{z_1,\dots, z_k \in \Omega}
\big|
n^k \rho_k(\sqrt{n} z_1, \dots, \sqrt{n} z_k)
-
n^k \det Q^{(k)}(z_1, \dots, z_k)
\big|
\le c^{-1} e^{-cn}.
\end{equation*}
\end{lemma}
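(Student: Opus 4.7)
The plan is to combine the Pfaffian formula from \Cref{l:21} with the determinantal reduction in \Cref{l:24}, using \Cref{l:23} to absorb the error. Fix $k\in\N$ and an admissible domain $\Omega$, and let $K=(K(\sqrt n z_i,\sqrt n z_j))_{1\le i,j\le k}$ denote the $2k\times 2k$ skew-symmetric matrix appearing in \eqref{pfcorr}, so that $\rho_k(\sqrt n z_1,\dots,\sqrt n z_k)=\pf(K)$ by \Cref{l:21}. Define the modified matrix $K^0$ by zeroing out all $D_n$ and $I_n$ entries; concretely $K^0_{ab}=K_{ab}$ when exactly one of $a,b$ is even, and $K^0_{ab}=0$ otherwise. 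By the block structure of \Cref{l:21}, this exactly removes the same-parity entries, so $K^0$ remains skew-symmetric and satisfies $K^0_{ab}=0$ whenever $a\equiv b\bmod 2$.

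Applying \Cref{l:24} to $K^0$ gives $\pf(K^0)=\det(\tilde K^0)$, where $\tilde K^0_{ij}=K^0_{2i-1,2j}=S_n(\sqrt n z_i,\sqrt n z_j)$. The right side is precisely the matrix $Q^{(k)}(z_1,\dots,z_k)$, so $\pf(K^0)=\det Q^{(k)}(z_1,\dots,z_k)$. It therefore suffices to show
\[
|\pf(K)-\pf(K^0)|\le C(k,d_\Omega)\,e^{-cn}
\]
uniformly over $z_1,\dots,z_k\in\Omega$, since multiplying by $n^k$ will absorb the polynomial prefactor at the cost of a slightly smaller exponential rate.

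For this bound I would expand the Pfaffian as a sum over perfect matchings,
\[
\pf(K)=\sum_{\pi\in\mathcal{M}_{2k}}\operatorname{sgn}(\pi)\prod_{\{a,b\}\in\pi,\,a<b}K_{ab},
\]
where $\mathcal{M}_{2k}$ is the (finite) set of perfect matchings of $\{1,\dots,2k\}$, and likewise for $\pf(K^0)$. A matching $\pi$ in which every edge joins an odd index to an even index contributes the same product to $\pf(K)$ and $\pf(K^0)$. Every other matching uses at least one same-parity edge, which contributes a zero factor to the product for $K^0$. Subtracting, the difference $\pf(K)-\pf(K^0)$ is a signed sum (of at most $(2k)!/(2^k k!)$ terms) in which each surviving product contains at least one factor $D_n(\sqrt n z_i,\sqrt n z_j)$ or $I_n(\sqrt n z_i,\sqrt n z_j)$, bounded by $Ce^{-cn}$ by \Cref{l:23}, while the remaining $k-1$ factors (whether $S_n$, $D_n$, or $I_n$) are uniformly bounded on $\Omega$ by the same lemma. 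This yields the desired estimate.

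I do not anticipate any substantive obstacle: once \Cref{l:24} is used to identify the ``main term'' of the Pfaffian as $\det Q^{(k)}$, the remainder of the proof is a straightforward accounting of the matching expansion combined with the uniform exponential decay of $D_n$ and $I_n$ from \Cref{l:23}. The only mild bookkeeping point is the verification that the block structure of $K$ really does place $D_n$ and $I_n$ exactly at the same-parity positions, which is immediate from the definitions in \Cref{l:21}.
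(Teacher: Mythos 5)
Your proposal is correct and follows essentially the same route as the paper's proof: zero out the $D_n$ and $I_n$ entries (which sit exactly at the same-parity positions), bound the resulting change in the Pfaffian by the exponential decay from \Cref{l:23}, and identify the Pfaffian of the modified matrix with $\det Q^{(k)}$ via \Cref{l:24}. Your matching expansion merely makes explicit the step the paper states in one sentence, and the bookkeeping (including absorbing the $n^k$ prefactor into the exponential) is sound.
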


The next lemma is a consequence of \Cref{l:detapprox} and provides a useful estimate on $S_n(\sqrt{n} z, \sqrt{n} w)$ when $z$ and $w$ are close together. Its proof also appears in \Cref{s:auxiliary}. We define 
\begin{equation*}
U(z,w) = \frac{\iu e^{(-n/2)(z - \bar w)^2 -n (\Im(z)^2 + \Im(w)^2)} }{2\pi \sqrt{  \Im(z) \Im(w)}} (\bar w - z).
\end{equation*}
\begin{lemma}\label{l:close}
Fix an admissible domain $\Omega$ and $r \in \R^+$. 
Then there exists a constant $C(r,d_\Omega) >0$ such that 
\begin{equation}
\sup_{z,w \in \Omega : |z -w | < r n^{-3/4}}
\left|
S_n(\sqrt{n} z, \sqrt{n} w) -
U(z,w)
\right| \le \frac{C}{n}.
\end{equation}
\end{lemma}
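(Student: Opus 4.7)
The plan is to expand $S_n(\sqrt n z,\sqrt n w)$ from the formula in \Cref{l:21} by asymptotically expanding its two transcendental factors---the complementary error function appearing in $G$ and the truncated exponential $s_n$---and identifying the leading order with $U(z,w)$. The additive bound $C/n$ will then follow from combining a multiplicative relative error of order $1/n$ with the a priori upper bound $|U(z,w)|=O(n^{-3/4})$ that the hypothesis $|z-w|<rn^{-3/4}$ provides.

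For the first factor, the classical expansion $\operatorname{erfc}(x)=\frac{e^{-x^2}}{\sqrt{\pi}\,x}(1+O(x^{-2}))$ as $x\to+\infty$, together with $\Im z,\Im w\ge d_\Omega>0$ on $\Omega$, yields
\begin{equation*}
G(\sqrt n z,\sqrt n w)=\frac{e^{-n((\Im z)^2+(\Im w)^2)}}{\sqrt{2\pi n\,\Im z\,\Im w}}\,\bigl(1+O(1/n)\bigr)
\end{equation*}
uniformly in $z,w\in\Omega$. For the second factor, writing $s_n(n\alpha)-1=-e^{-n\alpha}\sum_{j\ge n}(n\alpha)^j/j!$, taking absolute values, and applying the Chernoff bound for Poisson tails gives
\begin{equation*}
|s_n(n\alpha)-1|\le \exp\bigl(n(1-\Re\alpha+\log|\alpha|)\bigr).
\end{equation*}
Taking $\alpha=z\bar w$, the hypothesis $|z-w|<rn^{-3/4}$ keeps $\alpha$ within $O(n^{-3/4})$ of $|z|^2$, which lies in the compact interval $[d_\Omega^2,(1-d_\Omega)^2]$ since $\overline\Omega\subset\mathbb{D}^+$ and $0\in\partial\mathbb{D}^+$ force $d_\Omega\le|z|\le 1-d_\Omega$. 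On a small complex thickening of this interval, the exponent $1-\Re\alpha+\log|\alpha|$ is bounded above by some $-c(d_\Omega)<0$, and hence $s_n(nz\bar w)=1+O(e^{-cn})$ uniformly.

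Substituting both expansions into the definition of $S_n$, the $\sqrt{n}$ in its prefactor cancels the $\sqrt{n}$ in the denominator of $G$, and one obtains $S_n(\sqrt n z,\sqrt n w)=U(z,w)\bigl(1+O(1/n)\bigr)$. The elementary identity
\begin{equation*}
\Re\bigl[-\tfrac{n}{2}(z-\bar w)^2-n((\Im z)^2+(\Im w)^2)\bigr]=-\tfrac{n}{2}|z-w|^2,
\end{equation*}
verified by writing $z=a+bi$, $w=c+di$ and expanding, then gives $|U(z,w)|\le \frac{|z-w|}{2\pi\sqrt{\Im z\,\Im w}}\,e^{-(n/2)|z-w|^2}\le C(r,d_\Omega)\,n^{-3/4}$ under the hypothesis. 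Combining, $|S_n(\sqrt n z,\sqrt n w)-U(z,w)|\le C n^{-3/4}\cdot n^{-1}\le C/n$, as required.

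The main obstacle is the uniform bookkeeping: the $G$ expansion needs $\Im z,\Im w$ uniformly bounded below, while the $s_n$ expansion needs $z\bar w$ to keep the Chernoff exponent $1-\Re\alpha+\log|\alpha|$ bounded strictly below zero. Both hold by admissibility of $\Omega$, i.e., $d_\Omega>0$, and no techniques beyond those already underlying \Cref{l:23} and \Cref{l:detapprox} are required.
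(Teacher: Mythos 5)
Your overall route is the same as the paper's: expand the $\operatorname{erfc}$ factors via \eqref{erf} to obtain $S_n(\sqrt n z,\sqrt n w)=U(z,w)\,s_n(nz\bar w)\bigl(1+O(n^{-1})\bigr)$, and then show $s_n(nz\bar w)=1+O(n^{-1})$ using the strict negativity of $1-\Re(z\bar w)+\log|z\bar w|$ on a neighborhood of $\{|z|^2: z\in\Omega\}$, which follows from $d_\Omega\le|z|\le 1-d_\Omega$. Your Chernoff/Poisson-tail derivation of $|s_n(n\alpha)-1|\le e^{n(1-\Re\alpha+\log|\alpha|)}$ (valid here since $|z\bar w|\le(1-d_\Omega)^2<1$) is a clean, self-contained substitute for the paper's appeal to the asymptotic expansion of $s_n$ quoted from \cite[Lemma 4.1]{goel2023central}; both yield the same conclusion.

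However, your final step contains a genuine error. You claim $|U(z,w)|\le\frac{|z-w|}{2\pi\sqrt{\Im z\,\Im w}}e^{-(n/2)|z-w|^2}\le Cn^{-3/4}$. The modulus appearing in the numerator of $U$ is $|\bar w-z|$, not $|z-w|$: writing $z=a+b\iu$, $w=c+d\iu$, one has $|\bar w-z|^2=(a-c)^2+(b+d)^2\ge 4d_\Omega^2$, so this factor is bounded \emph{below} by a positive constant, not above by $O(n^{-3/4})$. Indeed $U(z,z)=1/\pi$ exactly (this value is used later in the paper, in \eqref{det1}), so $|U|$ is of order one and your claimed intermediate bound $|S_n-U|\le Cn^{-7/4}$ is false. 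The lemma is nonetheless saved, because only $|U(z,w)|=O(1)$ is needed to convert the multiplicative error $1+O(n^{-1})$ into the additive bound $C/n$; this follows immediately from $|\bar w-z|\le 2$, $e^{-(n/2)|z-w|^2}\le 1$, and $\Im z,\Im w\ge d_\Omega$ (or, as the paper argues, from $S_n=O(1)$ in \Cref{l:23} combined with $S_n=U(1+O(n^{-1}))$). With that one correction your argument is complete.
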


\subsection{Poisson Convergence}

We require two basic convergence results, one for Poisson random variables and one for Poisson point processes, which are proved in \Cref{s:auxiliary}.
 
\begin{proposition}\label{p:poisson}
Let $\{X_1 \}_{n=1}^\infty$ be a sequence of random variables taking values in the non-negative integers, and let $X$ be a Poisson random variable with rate $\lambda > 0$. Suppose that for all $k > 0$, 
\be\label{factorialmoments}
\lim_{n \rightarrow \infty}
\E 
\big[
X ( X - 1 ) \cdots (X - k) 
\big]
= \lambda^k.
\ee
Then the sequence $X_n$ converges to $X$ in distribution.
\end{proposition}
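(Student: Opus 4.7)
The plan is to use the method of factorial moments, leveraging two classical facts: (i) the $k$-th factorial moment of a Poisson$(\lambda)$ random variable $X$ equals $\lambda^k$, and (ii) the Poisson distribution is uniquely determined by its (factorial) moments. The strategy is to derive pointwise convergence of the mass functions $\P(X_n = j) \to \P(X = j)$ and then upgrade via Scheff\'e's lemma.

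First I would establish uniform moment bounds. Since each factorial moment of $X_n$ converges (to $\lambda^k$), it is bounded uniformly in $n$; writing ordinary powers of $X_n$ as non-negative linear combinations of falling factorials via Stirling numbers of the second kind then gives $C_k := \sup_n \E[X_n^k] < \infty$ for every $k \in \N$, with $C_k$ of the order of the $k$-th moment of Poisson$(\lambda)$.

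Next I would prove, for each $j \in \N$, the pointwise convergence $\P(X_n = j) \to e^{-\lambda}\lambda^j/j!$ via the combinatorial identity
\[
\mathbf{1}_{\{Y=j\}} = \sum_{m=0}^{\infty} (-1)^m \binom{j+m}{j}\binom{Y}{j+m},
\]
valid for any non-negative integer $Y$ with only finitely many non-zero summands per realization. Taking expectation, interchanging sum and expectation, and then passing to the $n \to \infty$ limit term-by-term---all justified by dominated convergence with the uniform-in-$n$ dominator $\sum_m C_{j+m}/(j!\,m!)$---I obtain
\[
\lim_{n\to\infty}\P(X_n = j) = \sum_{m=0}^\infty (-1)^m \binom{j+m}{j}\frac{\lambda^{j+m}}{(j+m)!} = \frac{e^{-\lambda}\lambda^j}{j!}.
\]
Scheff\'e's lemma then promotes pointwise convergence of mass functions to convergence in total variation, and hence in distribution.

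The main obstacle is the summability of the dominating series $\sum_m C_{j+m}/(j!\,m!)$. Since $C_k$ grows at most like the $k$-th moment of Poisson$(\lambda)$, which by standard estimates for Touchard polynomials is bounded by $(Ck/\log k)^k$ for some constant $C$, while $m!$ grows like $(m/e)^m$, the ratio $C_{j+m}/m!$ decays super-exponentially, ensuring absolute convergence. This quantitative growth control---ultimately coming from the Stirling-number expansion combined with the hypothesis---is the key input that makes the moment method applicable.
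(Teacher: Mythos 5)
Your route (the inclusion--exclusion identity for $\one_{\{X_n=j\}}$, pointwise convergence of the mass functions, then Scheff\'e) is genuinely different from the paper's, which instead converts factorial moments to ordinary moments via Stirling numbers and invokes the method of moments (Billingsley, Theorems 30.1--30.2), using that the Poisson law is moment-determinate. Your route can be made to work, but as written it has a gap in the justification of the interchange of $\lim_{n\to\infty}$ with the sum over $m$.

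The problem is the dominator. From the hypothesis you only get, for each \emph{fixed} $k$, that $B_k := \sup_n \E[X_n(X_n-1)\cdots(X_n-k+1)] < \infty$ (a convergent sequence is bounded). You get no control whatsoever on how $B_k$, or $C_k=\sup_n\E[X_n^k]$, grows in $k$: the supremum over $n$ may be attained at small $n$, at different $n$ for different $k$. Concretely, let $X_n$ equal a Poisson$(\lambda)$ variable with probability $1-e^{-\sqrt{n}}$ and equal $n$ with probability $e^{-\sqrt{n}}$. Then every factorial moment converges to $\lambda^k$, so the hypothesis (and the conclusion) hold, yet
\begin{equation*}
C_k \ \ge\ \sup_n e^{-\sqrt{n}}\, n^k \ \asymp\ \Big(\tfrac{4k^2}{e^2}\Big)^{k},
\end{equation*}
so $\sum_m C_{j+m}/(j!\,m!)$ diverges. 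Thus the claim that ``$C_k$ grows at most like the $k$-th moment of Poisson$(\lambda)$'' is false in general, and dominated convergence does not apply as stated. (A related issue appears already for fixed $n$: absolute convergence of $\sum_m \binom{j+m}{j}\E\binom{X_n}{j+m}$ amounts to $\E[\binom{X_n}{j}2^{X_n}]<\infty$, an exponential-moment condition not implied by finiteness of all factorial moments --- though this is harmless in the paper's application, where $X_n\le n$.)

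The standard repair is to replace domination by the Bonferroni bracketing: the partial sums $S_M(Y)=\sum_{m=0}^{M}(-1)^m\binom{j+m}{j}\binom{Y}{j+m}$ satisfy $S_{2M+1}(Y)\le \one_{\{Y=j\}}\le S_{2M}(Y)$ for every non-negative integer $Y$ (this follows from $\sum_{m=0}^{M}(-1)^m\binom{s}{m}=(-1)^M\binom{s-1}{M}$). Taking expectations, letting $n\to\infty$ term by term in the \emph{finite} sums, and then letting $M\to\infty$ squeezes $\P(X_n=j)$ to $e^{-\lambda}\lambda^j/j!$ with no uniform integrability input. With that substitution your argument is complete, and it even yields total variation convergence via Scheff\'e, which is slightly stronger than what the paper's moment-method proof gives.
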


\begin{proposition}\label{p:bab}
Let $\{\chi^{(n)}\}_{n=1}^\infty$ be a sequence of point processes on $\R$, and let $\chi$ be a Poisson point process on $\R$ with a intensity measure $\mu$, which we suppose has no atoms. If $\chi^{(n)}(J)$ converges in distribution to $\chi(J)$ for all bounded Borel sets $J\subset \R$, then  the sequence of point processes $\chi^{(n)}$ converges in distribution to $\chi$.
\end{proposition}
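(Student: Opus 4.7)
The plan is to verify weak convergence of the point processes $\chi^{(n)}$ by reducing it to the convergence of void probabilities, and then invoking a Kallenberg-type characterization of Poisson convergence. The underlying fact is that the distribution of a Poisson process with atomless intensity is determined by its void probabilities alone, which in turn makes void-probability convergence (over a sufficiently rich class of bounded Borel sets) equivalent to weak convergence of point processes to the Poisson limit.

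More precisely, the theorem I would invoke is the following: if $\chi$ is a Poisson point process on $\R$ with atomless intensity measure $\mu$, and $\chi^{(n)}$ is a sequence of point processes, then $\chi^{(n)} \to \chi$ weakly if and only if
\[
\P(\chi^{(n)}(J) = 0) \to e^{-\mu(J)}
\]
for every $J$ in a convergence-determining subclass of bounded Borel sets (for example, finite disjoint unions of bounded open intervals with $\mu$-negligible boundary). This is a standard result in the theory of random measures, due to Kallenberg. The atomlessness of $\mu$ is essential here: it both forces the limit Poisson process to be simple and makes the void probabilities a complete invariant for its distribution.

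Verifying the void probability convergence from the hypothesis is immediate. Since $\chi^{(n)}(J)$ converges in distribution to $\chi(J) \sim \mathrm{Poisson}(\mu(J))$, and the limit distribution is supported on the non-negative integers, every integer is a continuity point of the limiting distribution function. In particular, $\P(\chi^{(n)}(J) = 0) \to \P(\chi(J) = 0) = e^{-\mu(J)}$ for every bounded Borel $J \subset \R$, which is stronger than what the criterion requires. Applying the theorem then yields the desired weak convergence.

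The main difficulty in this plan is to identify the correct form of Kallenberg's theorem, since the literature contains several variants of Poisson convergence criteria, some of which additionally require convergence of first moments $\E[\chi^{(n)}(J)] \to \mu(J)$. The variant used here, which relies only on void-probability convergence, hinges on the atomlessness of $\mu$. As an alternative, one could instead verify convergence of the Laplace functional $\E[\exp(-\chi^{(n)}(f))]$ for $f \in C_c^+(\R)$ directly: approximating $f$ by simple functions of the form $\sum_i c_i \one_{J_i}$ with disjoint bounded Borel $J_i$, this reduces to showing joint convergence of $(\chi^{(n)}(J_1), \dots, \chi^{(n)}(J_k))$ to independent Poisson random variables, which in turn follows from the same void-probability computation.
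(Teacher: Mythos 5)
Your overall framing---reduce to a Kallenberg-type one-dimensional criterion, with the atomlessness of $\mu$ entering through the simplicity of the limit process---is the same as the paper's, which cites \cite[Theorem 4.15]{kallenberg2017random} together with the observations that $\chi$ is simple and that the bounded Borel sets form a dissecting ring. However, the specific criterion you make the crux of the argument is false: convergence of void probabilities $\P(\chi^{(n)}(J)=0)\to e^{-\mu(J)}$ alone does \emph{not} imply weak convergence of $\chi^{(n)}$ to the Poisson process $\chi$, even when $\mu$ is atomless. Take $\chi^{(n)}=2\eta_n$, where $\eta_n$ is a Poisson process with the same diffuse intensity $\mu$ (each point counted with multiplicity two). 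Then $\P(\chi^{(n)}(J)=0)=\P(\eta_n(J)=0)=e^{-\mu(J)}$ for every bounded Borel $J$ and every $n$, yet $\chi^{(n)}(J)$ is always even and converges to $2\,\mathrm{Poisson}(\mu(J))$, not to $\mathrm{Poisson}(\mu(J))$. Atomlessness of $\mu$ only controls the limit; it does nothing to exclude multiplicities in the approximating processes. This is exactly why every correct version of the avoidance-function convergence theorem carries an extra anti-clustering hypothesis (e.g.\ $\limsup_n\P(\chi^{(n)}(U)>1)\le\P(\chi(U)>1)$, or control of first moments), and why the version the paper uses instead assumes full one-dimensional convergence in distribution $\chi^{(n)}(U)\overset{d}{\to}\chi(U)$ on a dissecting ring. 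Your closing alternative via Laplace functionals has the same gap: joint convergence of $(\chi^{(n)}(J_1),\dots,\chi^{(n)}(J_k))$ to independent Poissons does not ``follow from the same void-probability computation''; passing from one-dimensional to finite-dimensional convergence is precisely the nontrivial content of Kallenberg's theorem.

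The repair is immediate, because you are discarding information you already have: the hypothesis of the proposition gives convergence in distribution of $\chi^{(n)}(J)$ to $\chi(J)$ for \emph{all} bounded Borel $J$, which is exactly the one-dimensional condition required by \cite[Theorem 4.15]{kallenberg2017random} for a simple limit process. Apply that statement directly rather than weakening the hypothesis to void probabilities; the doubled-Poisson example above shows the weakening is not harmless.
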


\section{Proof of Main Result}\label{s:main}

We now state a series a lemmas and show how they imply the main result. Their proofs  follow in the coming sections. 

\subsection{Modified Point Process}
We begin by introducing a auxiliary point process, the \emph{s-modified} point process corresponding to the eigenvalues of $W_n$. This technique was originally introduced in the works  \cite{soshnikov1998level,soshnikov2005statistics}.

Given a  bounded Borel set $A \subset \R^+$, we define the corresponding set 
\begin{equation}\label{d:BA}
B= \{ z \in \C : | z| \in A, 0 \prec z \},
\end{equation}
omitting the dependence on $A$ in the notation (since the choice of $A$ will  always be clear from context). 
For all $n \in \N$, we set
\be
A_n = \{n^{-3/4} a : a \in A\},
\qquad
B_n =
\{n^{-3/4} b : b \in B\},
\ee
and define
\be
\xi^{(n)}
=\sum_{i  : \lambda_i \in \Omega} \delta_{\lambda_i},
\quad
\tilde \xi^{(n)}
=\sum_{i  : \lambda_i \in \Omega } \delta_{\lambda_i} \one_{\{\xi^{(n)}(\lambda_i + B_n) =1   \} },
\ee
where the set $\lambda_i + B_n$ has the usual definition as
$\lambda_i + B_n = \{ \lambda_i + z : z \in B_n\}$. 

The following lemma implies that it suffices to study $\tilde \xi^{(n)}$ in order to prove our main result. It is proved in \Cref{s:proof_main_lemmas}.
\begin{lemma}\label{l:33}
Let $\Omega$ be an admissible domain. 
For all bounded Borel sets $A \subset \R^+$, we have 
\begin{equation}\label{l33conclusion}
\lim_{n\rightarrow \infty} \chi^{(n)}(A)
- \tilde \xi^{(n)}(\Omega) =0,
\end{equation}
where the convergence is in distribution.
\end{lemma}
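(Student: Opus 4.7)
My plan is a first-moment bound showing that $\chi^{(n)}(A) - \tilde\xi^{(n)}(\Omega)$ is zero with probability approaching one. Set $M = \sup A$. For each $i$ with $\lambda_i \in \Omega$, let $E_i$ denote the event that $i$ contributes to $\chi^{(n)}(A)$ (i.e., $i^* \neq 0$ and $n^{3/4}|\lambda_{i^*} - \lambda_i| \in A$), and $F_i$ the event that $i$ contributes to $\tilde\xi^{(n)}(\Omega)$ (i.e., $\xi^{(n)}(\lambda_i + B_n) = 1$). A short case analysis shows that $E_i \bigtriangleup F_i$ is contained in the event $G_i$ that there exist two distinct indices $j_1 \neq j_2$, both distinct from $i$, with $\lambda_{j_\ell} \in \Omega$, $\lambda_i \prec \lambda_{j_\ell}$, and $n^{3/4}|\lambda_{j_\ell} - \lambda_i| \leq M$ for $\ell = 1, 2$: either $\xi^{(n)}(\lambda_i + B_n) \geq 2$ (which witnesses two such $j$'s directly), or else the minimizer $\lambda_{i^*}$ has scaled distance outside $A$ while a distinct point attains scaled distance in $A$ (yielding $\lambda_{i^*}$ and this second point as the two clustered points). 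Hence $|\chi^{(n)}(A) - \tilde\xi^{(n)}(\Omega)| \leq \sum_i \one_{G_i}$, which is bounded by the count of ordered triples of distinct indices $(i, j_1, j_2)$ realizing this cluster configuration.

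By \eqref{corrdef} and \Cref{r:23}, the expected number of such triples is at most
\[
\iiint_T n^3\, \rho_3\bigl(\sqrt n z_1, \sqrt n z_2, \sqrt n z_3\bigr)\, dz_1\, dz_2\, dz_3,
\]
where $T = \{(z_1, z_2, z_3) \in \Omega^3 : |z_1 - z_\ell| \leq M n^{-3/4},\ \ell = 2, 3\}$ has Lebesgue measure of order $n^{-3}$. \Cref{l:detapprox} replaces the integrand by $n^3 \det Q^{(3)}(z_1, z_2, z_3)$ with exponentially small additive error, which contributes negligibly. \Cref{l:close} further yields $|S_n(\sqrt n z_i, \sqrt n z_j) - U(z_i, z_j)| = O(n^{-1})$ on $T$, so by multilinearity and the uniform bound $|S_n|, |U| = O(1)$ from \Cref{l:23}, $|\det Q^{(3)} - \det (U(z_i, z_j))_{i,j=1}^3| = O(n^{-1})$; integrating this error over $T$ contributes $O(n^3 \cdot n^{-1} \cdot n^{-3}) = O(n^{-1})$ to the expected triple count.

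The main obstacle is proving the matching Vandermonde-type bound
\[
\bigl|\det (U(z_i, z_j))_{i,j=1}^3\bigr| \leq C\, n^3 \prod_{1 \leq i < j \leq 3} |z_i - z_j|^2 \qquad \text{for } (z_1, z_2, z_3) \in T.
\]
To prove it, I will Taylor-expand about a reference point $z_0 \in \Omega$ and set $u_i = \sqrt n(z_i - z_0)$, so $|u_i| = O(n^{-1/4})$. A direct computation of the exponent of $U$ shows that after subtracting the large linear phase $-2\iu n\, \Im(z_0)\, \Re(z_i - z_j)$, which is of the form $\alpha_i - \alpha_j$ and hence a gauge invisible to determinants, the exponent reduces to $u_i \bar u_j - u_i^2/2 - \bar u_j^2/2 - (\Im u_i)^2 - (\Im u_j)^2 + O(n^{-1/2})$. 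The terms depending only on $i$ or only on $j$ can be absorbed by further row/column multiplicative scalings (again invisible to the determinant up to higher-order error), and the prefactor $\iu(\bar z_j - z_i)/(2\pi \sqrt{\Im z_i \Im z_j})$ equals $\pi^{-1} + O(n^{-3/4})$. What remains is the Ginibre bulk kernel $\pi^{-1} e^{u_i \bar u_j}$ up to $(1 + O(n^{-1/4}))$ factors, and a Cauchy--Binet expansion applied to the truncation $\sum_{k \leq 2} (u_i \bar u_j)^k / k! = V V^*$ with $V_{ik} = u_i^k/\sqrt{k!}$ yields the leading determinant $(2\pi^3)^{-1} \prod_{i<j}|u_i - u_j|^2 = (n^3 / (2\pi^3)) \prod_{i<j}|z_i - z_j|^2$. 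The subtlety is that the rank-two leading term of each entry contributes $0$ to the $3\times 3$ determinant, so the effective size of $\det U$ is set only by the quadratic corrections in the expansion of the exponential, which must be tracked carefully to identify the correct Ginibre-type structure.

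Combining the estimates, after the substitution $w_\ell = n^{3/4}(z_\ell - z_1)$ reduces the Vandermonde integral to a constant times $n^{-15/2}$, the leading contribution $n^3 \cdot (n^3 \prod |z_i - z_j|^2)$ integrates to $O(n^{-3/2})$, so the total expected triple count is $O(n^{-3/2}) + O(n^{-1}) + O(e^{-cn}) = o(1)$. By Markov's inequality, the integer-valued quantity $|\chi^{(n)}(A) - \tilde\xi^{(n)}(\Omega)|$ is zero with probability tending to one, yielding convergence to zero in probability and hence in distribution, which establishes \eqref{l33conclusion}.
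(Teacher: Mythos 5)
Your reduction is structurally the same as the paper's: the symmetric-difference case analysis (either $\xi^{(n)}(\lambda_i+B_n)\ge 2$, or the minimizer plus a second nearby point) is exactly the paper's first step in the proof of \Cref{l:33}, and the expected-triple-count estimate you then need is precisely the content of \Cref{l:bunching}. The gap is in how you propose to prove that estimate. You claim the Vandermonde-type bound $|\det(U(z_i,z_j))_{i,j\le 3}|\le C n^3\prod_{i<j}|z_i-z_j|^2$ on the clustering region $T$, and your route to it is: gauge away phases, absorb factors depending on $i$ alone or $j$ alone into row/column scalings, approximate what remains by the Ginibre bulk kernel ``up to $(1+O(n^{-1/4}))$ factors,'' and apply Cauchy--Binet to a truncated expansion. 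This does not establish the bound. On $T$ one has $|z_i-z_j|\le 2Mn^{-3/4}$, so the target is of size $O(n^{-3/2})$, while the errors you leave behind after the reduction --- $O(n^{-3/4})$ in the prefactor, $O(n^{-1/2})$ in the exponent, plus the $(1+O(n^{-1/4}))$ factors --- are entrywise perturbations that are not shown to respect the three-fold cancellation producing the $n^{-3/2}$ scale; generically, perturbing a $3\times 3$ matrix with $O(1)$ entries by such amounts moves its determinant by at least the square of the perturbation (and typically by the perturbation itself), i.e.\ by $O(n^{-1})$ or worse, which swamps the claimed bound by a power of $n$. You flag exactly this issue (``the rank-two leading term of each entry contributes $0$,'' so the determinant lives entirely in the corrections), but the proposal contains no mechanism to control the corrections; the Cauchy--Binet computation only identifies the leading term of the unperturbed kernel, not an upper bound for $\det U$ itself. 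So the key quantitative step is missing.

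Two observations for repairing it. First, the sharp Vandermonde bound is unnecessary: since $T$ has measure $O(n^{-3})$ and the correlation scaling contributes $n^3$, any pointwise $o(1)$ bound on $\det Q^{(3)}$ over $T$ suffices. This is what the paper's proof of \Cref{l:bunching} does: using \Cref{l:close} it shows $Q_{22}Q_{33}=1/\pi^2+O(n^{-1/2})$ and $Q_{23}Q_{32}=1/\pi^2+O(n^{-1/2})$, so each cofactor combination such as $Q_{22}Q_{33}-Q_{23}Q_{32}$ is $O(n^{-1/2})$; this two-fold cancellation is robust to the available entrywise errors, unlike the three-fold cancellation you aim for, and yields $\E[\Xi^{(3)}(\mathcal B_M)]=O(n^{-1/2})$. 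Second, if you do want the sharp bound, the workable route is structural rather than perturbative: writing $U(z,w)=\tfrac{\iu}{2\pi}\,g(z)\overline{g(w)}\,(\bar w-z)e^{nz\bar w}$ with $K(\zeta,\eta)=(\eta-\zeta)e^{n\zeta\eta}$ entire, the polarized determinant $\det(K(\zeta_i,\eta_j))$ vanishes on $\zeta_i=\zeta_j$ and $\eta_i=\eta_j$ and is therefore divisible by $\prod_{i<j}(\zeta_i-\zeta_j)(\eta_i-\eta_j)$; one must then bound the analytic cofactor (e.g.\ by Cauchy estimates) to extract the correct power of $n$. Either fix requires an argument the proposal does not currently contain.
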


We let $\tau_k$ denote the $k$-th correlation function for $\xi^{(n)}$, suppressing the dependence on $n$ in the notation. From the definition of a correlation function (see e.g.\ \cite[Definition 2]{soshnikov2000determinantal}) and \Cref{r:23}, it is straightforward to see that $\tau_k$ is given by
\begin{equation}\label{taukdef}
\tau_k(z_1, \dots, z_k) =
n^k \rho_k(\sqrt{n} z_1, \dots, \sqrt{n} z_k) \one_{\Omega}(z_1) \dots \one_{ \Omega}(z_k).
\end{equation}
Further, we let $\tilde \tau_k$ denote the $k$-th correlation function for the modified process $\tilde \xi_n$. The function $\tilde \tau_k$ can be written explicitly in terms of the functions $\{\tau_{i}\}_{i=1}^{n}$ using the inclusion--exclusion principle; see \cite[(4.5)]{soshnikov2000determinantal} for details.

For every $k\in \N$, we define the set
\begin{equation}
\Psi_k
= 
\big\{
(z_1, \dots, z_k) \in \Omega^k : (z_i + B_n) \cap (z_j + B_n) = \varnothing \text{ for all } i\neq j, 1 \le i,j \le k
\big \}.
\end{equation}
The following lemma is proved in \Cref{s:proof_main_lemmas}.
\begin{lemma}\label{l:integration}
Fix an admissible domain $\Omega$, a bounded Borel set $A\subset \R^+$, and $k \in \N$. 
\begin{enumerate}
\item 
There exists a set $\mathcal Z \subset \Omega^k$ such that $\mu(\mathcal Z) =0$ and the following holds. For all $(z_1, \dots, z_k) \in \Omega^k \setminus \mathcal Z$ with pairwise distinct entries, 
\begin{equation}
\lim_{n \rightarrow \infty} 
\tilde \tau_k(z_1, \dots, z_k) = \left( \frac{1}{\pi^2} \int_{B} |z|^2 \, dz  \right)^k.
\end{equation}
\item There exist constants $C, c>0$, depending only on $A$, $k$, and $\Omega$, such that the following statements hold for all $n \in \N$. There exists a set $\mathcal W_n \subset \Omega^k$  such that
\be\label{W1}
\P ( \mathcal W_n ) \le C e^{-c n },
\ee
and for all $(z_1, \dots, z_k) \in \Psi_k \setminus \mathcal W_n$, we have 
\begin{equation}\label{W2}
\tilde \tau_k (z_1, \dots, z_k) \le C,
\end{equation}
and for all $(z_1, \dots, z_k) \in \mathcal W_n$, 
\be\label{W3}
\tilde \tau_k (z_1, \dots, z_k) \le C n^{8k}. 
\ee 
\item Set $\overline{\Psi}_k = \Omega^k \setminus \Psi_k$. Then
\begin{equation}
\lim_{n\rightarrow \infty}
\int_{\overline{\Psi}_k}
\tilde \tau_k(z_1, \dots, z_k)
\, dz_1 \dots d z_k = 0.
\end{equation}
\end{enumerate}
\end{lemma}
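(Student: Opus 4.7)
\textbf{Proof plan for \Cref{l:integration}.}

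All three parts rest on the same backbone: the Palm/inclusion–exclusion expansion, which expresses $\tilde{\tau}_k(z_1, \dots, z_k)$ as an alternating sum of multiple integrals of the unmodified correlation functions $\tau_K$ (for various $K \ge k$) over products of balls $z_i + B_n$ (the ``partner'' variables, one per $z_i$) and their unions (the ``exclusion'' variables). Using \Cref{l:detapprox} to replace $\tau_K$ by $n^K \det Q^{(K)}$ up to error $O(e^{-cn})$ reduces the analysis to the determinantal kernel $Q$; close-range behavior of $Q$ is controlled by \Cref{l:close} and macroscopic separations by the uniform bound of \Cref{l:23}.

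For Part 1, I fix $(z_1, \dots, z_k) \in \Omega^k$ with pairwise distinct entries, outside a Lebesgue-null set $\mathcal Z$ of degenerate configurations. For large $n$ the balls $z_i + B_n$ are disjoint, and standard bulk asymptotics give $|S_n(\sqrt n z, \sqrt n w)| = O(e^{-cn})$ whenever $z, w$ lie in distinct clusters. Hence $Q^{(K)}$ is block diagonal up to exponentially small errors, producing $\tilde{\tau}_k(z_1, \dots, z_k) \to \prod_i \tilde{\tau}_1(z_i)$. Each one-point factor is then computed by substituting $\eta = z_i + n^{-3/4} t$ and applying \Cref{l:close}: the leading order of $S_n(\sqrt n z, \sqrt n w)$ factors as $\frac{1}{\pi} f(z) \overline{f(w)}$ with unimodular phase $f$, giving a rank-one matrix whose determinant vanishes in size $\ge 2$, and forcing $\det Q^{(m)} = O(n^{-(m-1)/2})$ for $m \ge 2$. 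Thus only the lowest-order Palm term $\int_{z_i + B_n} \tau_2(z_i, \eta)\,d\eta$ survives, and an expansion of $\det Q^{(2)}$ past leading order produces a level-repulsion factor $|z_i - \eta|^2$ that, after rescaling, yields $\tilde{\tau}_1(z_i) \to \frac{1}{\pi^2}\int_B |t|^2\,dt$.

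For Part 2, define $\mathcal W_n \subset \Omega^k$ as the set on which the Hermitian matrix $\bigl(S_n(\sqrt n z_i, \sqrt n z_j)\bigr)_{i,j}$ has an entry of magnitude exceeding $e^{-cn}$ but fails to be positive semidefinite. The leading-order decomposition $S_n \approx \frac{1}{\pi} f \overline{f}$ presents this matrix as a rank-one PSD piece plus a perturbation quantified by \Cref{l:close}; a quantitative analysis shows the perturbation destroys positivity only on a set of exponentially small measure, establishing \eqref{W1}. On $\Psi_k \setminus \mathcal W_n$, positivity combined with Hadamard's inequality bounds every $|\det Q^{(K)}|$ appearing in the inclusion–exclusion by a constant, and summation yields \eqref{W2}. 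On $\mathcal W_n$, the crude bound $|S_n| \le C$ from \Cref{l:23} with general Hadamard's inequality gives $|\det Q^{(K)}| \le K!\,C^K$, and combining with polynomial volume factors for the integration regions produces the weak bound \eqref{W3}.

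For Part 3, $|\overline{\Psi}_k| = O(n^{-3/2})$ since pair overlap is an $O(n^{-3/4})$ proximity condition. I split $\overline{\Psi}_k$ into the pair-containment region $\{z_j - z_i \in B_n \text{ for some } i \ne j\}$ and its complement. On containment configurations, the altered Palm expansion has one fewer partner integral (the partner of $z_i$ is forced to be the nearby $z_j$), gaining an extra volume factor $|B_n| = O(n^{-3/2})$; combined with the small set measure and a factor $O(n^{-1/2})$ arising from the remaining ``exactly one other'' probability for $z_j$, the contribution vanishes. On the non-containment overlap part, the level-repulsion bound $\det Q^{(2)}(z_i, z_j) = O(n|z_i - z_j|^2)$ (from the expansion of $|U|^2$ near the diagonal) provides decay that offsets the $O(n^{-3/2})$ set measure. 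The principal technical obstacle is the exceptional-set measure estimate in Part 2: unlike the GinUE kernel, $S_n$ is not positive-definite in any obvious global sense, so establishing $|\mathcal W_n| \le Ce^{-cn}$ requires carefully exploiting the approximate rank-one PSD structure at the microscopic scale together with exponential off-diagonal decay, and quantifying the perturbative corrections along the full range of configurations.
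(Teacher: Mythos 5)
The decisive gap is your mechanism for \eqref{W1}. You propose to write $\bigl(S_n(\sqrt n z_i,\sqrt n z_j)\bigr)_{i,j}$ as a rank-one PSD piece $\tfrac1\pi f\overline f$ plus a perturbation quantified by \Cref{l:close}, and to argue that the perturbation destroys positivity only on a set of exponentially small measure. This cannot work as stated. First, the factorization $S_n(\sqrt n z,\sqrt n w)\approx\tfrac1\pi f(z)\overline{f(w)}$ is only valid at microscopic separations $|z-w|=O(n^{-3/4})$, the regime of \Cref{l:close}; the configurations where positive semidefiniteness is genuinely in doubt are those containing pairs at intermediate separations $n^{-3/4}\ll|z_i-z_j|\ll 1$, where $S_n$ is neither exponentially small nor approximately rank one, and the set of such configurations has only \emph{polynomially} small Lebesgue measure, so it cannot simply be discarded in an $e^{-cn}$ bound — one must prove positivity, or exponential smallness of the determinant, on most of it. Second, even at microscopic scale, ``rank-one PSD plus a small Hermitian perturbation'' is generically \emph{not} PSD: the rank-one part has a $(k-1)$-dimensional kernel, so perturbations of any polynomial size can create negative eigenvalues, and no quantitative perturbation argument from this decomposition yields positivity. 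The paper's actual mechanism is entirely different and is the heart of the proof: wherever a leading principal minor $Q^{(m)}$ has nonpositive determinant, the nonnegativity of the true correlation function $\tau_m$ together with \Cref{l:detapprox} forces $\tau_m\le Ce^{-cn}$; then integrating out variables (\Cref{l:integrate_out_lemma}) and a Markov/Fubini argument show that the set where $Q^{(k)}$ fails to be positive definite \emph{and} its determinant is not exponentially small has exponentially small measure (\Cref{l:pdset}), with a fibered version (\Cref{l:pdset2}) because the inclusion--exclusion integrals adjoin the partner and exclusion variables to the base configuration — a fibering issue your proposal never addresses, since your $\mathcal W_n$ only constrains the $k\times k$ matrix of the base points, while the bounds require control of $Q^{(2k+m)}$ for almost every choice of the added points.

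A secondary quantitative problem concerns \eqref{W2}: bounding every $|\det Q^{(K)}|$ by a constant via Hadamard and multiplying by $n^{2k+m}$ and the integration volumes $O(n^{-3(k+m)/2})$ gives $O(n^{(k-m)/2})$, which is unbounded for $m<k$; the uniform constant bound requires retaining the level-repulsion cancellation in the partner integrals, i.e.\ splitting off only the exclusion variables with the Fischer-type inequality (\Cref{l:pd}) and using $\sup_{z\in\Omega}\bigl|\int_{z+B_n}n^2\det Q^{(2)}(z,x)\,dx\bigr|\le C$ as in \eqref{turkey2} — an ingredient your Part 1 effectively derives but your Part 2 does not invoke. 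Your Parts 1 and 3 otherwise follow the paper's backbone (the expansion \eqref{pie}, \Cref{l:detapprox}, cluster decomposition with exponentially small off-diagonal blocks, and the microscopic repulsion computation), but the control of the higher Palm terms and of the overlap region again rests on the positivity-off-a-small-set input, so without a valid substitute for \Cref{l:pdset} and \Cref{l:pdset2} the argument does not close.
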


\begin{proof}[Proof of \Cref{t:main}]
By the definition of a correlation function \cite[Definition 2]{soshnikov2000determinantal},  we have 
\begin{equation}\label{descending}
\int_{\Omega^k} \tilde \tau_k( z_1, \dots ,  z_k) \, dz_1 \dots d z_k =
\E\left[ 
\frac{
\tilde \xi^{(n)}(\Omega)!}{
\big(  
\tilde \xi^{(n)}(\Omega) - k
\big)!
} \right]. 
\end{equation}
Abbreviating $\mathcal W = \mathcal W_n$, we write 
\begin{align}
\int_{\Omega^k} 
\tilde \tau_k(z_1, \dots, z_k)\, dz_1 \dots z_k
 =&
\int_{\Omega^k } 
 \one_{\Psi_k \setminus \mathcal W} \tilde \tau_k(z_1, \dots, z_k)\, dz_1 \dots z_k\label{rhs1} \\
 &+  \int_{\Omega^k} \one_{\mathcal W} \tilde \tau_k(z_1, \dots, z_k)\, dz_1 \dots z_k \label{rhs1b} \\ 
&+ \int_{\Omega^k} 
 \one _{\overline{\Psi}_k} \tilde \tau_k(z_1, \dots, z_k)\, dz_1 \dots z_k.\label{rhs2}
\end{align}
Observe that by the Borel--Cantelli lemma and \eqref{W1}, we have 
\be\label{patch}
\lim_{n\rightarrow \infty}
\one_{\Psi_k \setminus \mathcal W } = \one_{\Omega}.
\ee 
The integrand in the integral on the right-hand side of \eqref{rhs1} is uniformly bounded, by \Cref{l:integration}(1), so by the dominated convergence theorem, \Cref{l:integration}(2), and \eqref{patch}, we conclude that
\begin{equation}
\lim_{n\rightarrow \infty} \int_{\Omega^k } 
 \one_{\Psi_k} \tilde \tau_k(z_1, \dots, z_k)\, dz_1 \dots z_k
  =\left( \frac{1}{\pi^2} \int_{B} |z|^2 \, dz  \right)^k\left( \int_{\Omega} dz \right)^k.\label{rhs3}
\end{equation}
Using \eqref{W1} and \eqref{W3}, we see that 
\begin{equation}\label{red}
\lim_{n\rightarrow \infty} \int_{\Omega^k} \one_{\mathcal W} \tilde \tau_k(z_1, \dots, z_k)\, dz_1 \dots z_k  = 0.
\end{equation}
Further, by \eqref{rhs2}, the limit as $n$ tends to infinity of \eqref{rhs2} also vanishes. Then combining \eqref{descending}, \eqref{rhs1}, \eqref{red}, and \eqref{rhs3}, we find 
\begin{equation}
\lim_{n \rightarrow \infty} \E\left[ 
\frac{
\tilde \xi^{(n)}(\Omega)!}{
\big(  
\tilde \xi^{(n)}(\Omega) - k
\big)!
} \right]
= \left(  \int_{B} |z|^2 \, dz  \right)^k \left( \frac{1}{\pi^2}\int_{\Omega} dz \right)^k.
\end{equation}
It follows from \Cref{p:poisson} that $\tilde \xi^{(n)}(\Omega)$ converges in distribution to a Poisson random variable with intensity equal to 
\begin{equation}\label{poissonrate}
\left(\int_B |z|^2 \, dz \right)
\left( \int_{\Omega} \frac{dz}{\pi^2} \right) 
= \frac{ | \Omega |}{\pi} \int_A r^3\, dr.
\end{equation}
Then \Cref{l:33} implies that $\chi^{(n)}_\Omega( A)$ converges in distribution to a Poisson random variable with rate given by \eqref{poissonrate}. Since this convergence holds for all bounded Borel sets $A \subset \R^+$, \Cref{p:bab} implies that $\chi^{(n)}_\Omega$ converges to $\chi_\Omega$, as desired.
\end{proof}

For the proof of \Cref{c:maincor}, we additionally require the next lemma, which says that it is unlikely that three eigenvalues all bunch together on the scale of the smallest eigenvalue gap. 
It is also proved in \Cref{s:proof_main_lemmas}. 
We define the random measure
\be \label{xi}
\Xi^{(3)} = \sum_{\lambda_{i_1}, \lambda_{i_2}, \lambda_{i_3}\text{ pairwise distinct}} \delta_{(\lambda_{i_1}, \lambda_{i_2}, \lambda_{i_3})},
\ee 
on $\C^3$, 
and for all admissible domains $\Omega$ and all $M>0$, we define
\be
\mathcal B_M = \{
(z, x_1, x_2) : z \in \Omega, |x_1 - z| \le M n^{-3/4}, |x_2 - z| \le M n^{-3/4} \}.
\ee

\begin{lemma}\label{l:bunching}
Fix $M>0$ and an admissible domain $\Omega$. Then
\be
\lim_{n\rightarrow \infty} 
\E\big[ \Xi^{(3)} (\mathcal B_M)\big] = 0.
\ee
\end{lemma}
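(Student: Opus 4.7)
The plan is to express $\E[\Xi^{(3)}(\mathcal B_M)]$ as an integral of the third correlation function, pass to the determinantal approximation of \Cref{l:detapprox}, and then exploit the near-coincidence of the three arguments to show that the resulting $3\times 3$ determinant is small. For $n$ sufficiently large, the constraints in $\mathcal B_M$ force $(z,x_1,x_2)$ to lie in a fixed admissible enlargement $\Omega' \supset \overline{\Omega}$, and in particular to avoid $\R$, so by \eqref{corrdef} and \Cref{r:23},
\begin{equation*}
\E\bigl[\Xi^{(3)}(\mathcal B_M)\bigr] = \int_{\mathcal B_M} n^3\,\rho_3(\sqrt{n}\,z,\sqrt{n}\,x_1,\sqrt{n}\,x_2)\,dz\,dx_1\,dx_2.
\end{equation*}
Applying \Cref{l:detapprox} with $k=3$ on $\Omega'$ replaces the integrand by $n^3\det Q^{(3)} + O(e^{-cn})$, and the error contributes negligibly since $|\mathcal B_M| = |\Omega|\pi^2 M^4 n^{-3}$. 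Since $|x_1 - x_2| \le 2Mn^{-3/4}$ by the triangle inequality, \Cref{l:close} applies to every pair and yields $Q^{(3)}_{ij} = U(z_i,z_j) + O(n^{-1})$ uniformly on $\mathcal B_M$; standard perturbation of the $3\times 3$ determinant then gives $\det Q^{(3)} = \det(U(z_i,z_j))_{i,j=1}^3 + O(n^{-1})$.

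The main task is to show that $\det(U(z_i,z_j))_{i,j=1}^3 = o(1)$ uniformly on $\mathcal B_M$. A direct computation from the definition of $U$ in \Cref{l:close} confirms that $U$ is Hermitian as a kernel ($\overline{U(z,w)}=U(w,z)$), with $U(z,z) = 1/\pi$ and $|U(z,w)| = \tfrac{|\bar w - z|}{2\pi\sqrt{\Im z\,\Im w}}\,e^{-(n/2)|z-w|^2}$. Writing $U(z_i,z_j) = \pi^{-1} e^{a_{ij}}$ with $a_{ii}=0$ and $\overline{a_{ij}}=a_{ji}$, and setting $\Theta := \Im(a_{12}+a_{23}+a_{31})$, the Hermitian $3\times 3$ determinant becomes
\begin{equation*}
\pi^3\det(U(z_i,z_j))_{i,j=1}^3 = 1 - \sum_{i<j} e^{2\Re a_{ij}} + 2 e^{\sum_{i<j}\Re a_{ij}}\cos\Theta.
\end{equation*}
A short calculation shows $2\Re a_{ij} = -n|z_i-z_j|^2 + O(n^{-3/4})$, while the shoelace formula applied to the phases of $U_{ij}$ yields $\Theta = 2n\cdot\mathrm{SignedArea}(z_1,z_2,z_3) + O(n^{-3/4})$. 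Both quantities are $O(n^{-1/2})$ on $\mathcal B_M$, and Taylor expanding the right-hand side to second order produces a quadratic combination that equals $4(n\cdot\mathrm{Area})^2 - \Theta^2$ up to lower-order corrections. Since $\Theta^2 = 4n^2\,\mathrm{Area}^2 + O(n^{-5/4})$, this combination vanishes identically by Heron's formula $16\,\mathrm{Area}^2 = 2(d_{12}^2 d_{23}^2 + d_{23}^2 d_{31}^2 + d_{31}^2 d_{12}^2) - (d_{12}^4 + d_{23}^4 + d_{31}^4)$, where $d_{ij} = |z_i - z_j|$. Only cubic terms in $(2\Re a_{ij},\Theta)$ survive, so $\det(U(z_i,z_j))_{i,j=1}^3 = O(n^{-5/4})$ uniformly on $\mathcal B_M$.

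Combining these bounds, $\det Q^{(3)} = O(n^{-1})$ on $\mathcal B_M$, hence $\tau_3 \le Cn^2$ there, and $\E[\Xi^{(3)}(\mathcal B_M)] \le Cn^2\cdot|\mathcal B_M| = O(n^{-1}) \to 0$. The principal technical obstacle is the quadratic cancellation in the Taylor expansion: the naive pointwise bound $|\det(U(z_i,z_j))_{i,j=1}^3| \le \pi^{-3}$ would give an integral of order $1$ that does not vanish, so some nontrivial cancellation is indispensable. The Heron-formula identity is the quantitative expression of the determinantal eigenvalue repulsion governing the Ginibre-type microscopic limit in the bulk, and is what prevents three eigenvalues from clustering on the scale $n^{-3/4}$.
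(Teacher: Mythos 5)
Your proposal is correct, and it follows the same overall skeleton as the paper's proof: write $\E[\Xi^{(3)}(\mathcal B_M)]$ as $\int_{\mathcal B_M} n^3\rho_3(\sqrt n\,\cdot)$, pass to $n^3\det Q^{(3)}$ via \Cref{l:detapprox}, show the determinant is $o(1)$ uniformly on the near-coincident triples, and beat the volume factor $|\mathcal B_M|=O(n^{-3})$. Where you diverge is in how the smallness of the determinant is extracted. The paper performs a cofactor expansion of $\det Q^{(3)}$ and shows each of the three terms $Q_{1i}(Q_{jk}Q_{lm}-\dots)$ is $O(n^{-1/2})$ individually, because each $2\times2$ minor nearly vanishes: $Q_{jk}Q_{kj}\approx|U(z_j,z_k)|^2=\pi^{-2}e^{-n|z_j-z_k|^2}=\pi^{-2}+O(n^{-1/2})\approx Q_{jj}Q_{kk}$. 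In other words, only pairwise cancellation is used, applied inside each cofactor. You instead compute the full $3\times3$ Hermitian determinant of the limiting kernel $U$ in closed exponential form and identify a genuinely three-point cancellation: the quadratic Taylor term $\tfrac14(\sum\alpha_{ij})^2-\tfrac12\sum\alpha_{ij}^2-\Theta^2$ vanishes to leading order because Heron's formula equates the modulus contribution $4n^2\,\mathrm{Area}^2$ with the phase contribution $\Theta^2$ (your shoelace identification of $\Theta$ checks out). This yields the sharper $\det(U(z_i,z_j))=O(n^{-5/4})$, although the final bound $\det Q^{(3)}=O(n^{-1})$ is capped by the entrywise error in \Cref{l:close}; either way $\E[\Xi^{(3)}(\mathcal B_M)]=O(n^{-1})\to0$, versus the paper's $O(n^{-1/2})$. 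Your argument is more delicate but more structurally illuminating, and it also handles a point the paper elides: the points $x_1,x_2$ may fall slightly outside $\Omega$, which you fix by working on a fixed admissible enlargement $\Omega'$. Two minor points to make explicit in a full write-up: the representation $U(z_i,z_j)=\pi^{-1}e^{a_{ij}}$ with small $a_{ij}$ is legitimate because $U(z,w)=0$ only when $w=\bar z$, which cannot occur for points in $\H$, and the branch of $a_{ij}$ should be chosen so that $\Im a_{ij}=O(n^{-1/2})$ (the natural continuous choice).
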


\begin{proof}[Proof of \Cref{c:maincor}]
Given \Cref{t:main}, the proof of this corollary is essentially the same as that of \cite[Corollary 1.1]{shi2012smallest}. We include it here for completeness.

Let $\tilde t_1 \le \dots \le \tilde t_k$ denote the $k$ smallest values in the set 
\be
\big\{ | \lambda_i  - \lambda_{i^*} | :  \lambda_i \in \Omega \land i^* \neq 0   \big\},
\ee 
and define the rescaled values 
\be 
\tilde \omega_\ell = n^{3/4} \left(\frac{\pi }{4 | \Omega| }\right)^{1/4} \tilde t_\ell.
\ee 
Observe that we have suppressed the $n$-dependence in the notation for the $\tilde t_k$ and $\tilde \omega_k$. We also use the shorthand $t_k = t^{(n)}_k$.
We will first show that the $\tilde \omega_\ell$ have the desired joint limiting distribution, then transfer this result to the $\omega_\ell$. 

We begin by observing that the event 
\be\label{tildeevent}
\{
 x_\ell
< \tilde \omega_\ell <  y_\ell \text{ for all } 1 \le \ell \le k \}
\ee
can be written as the intersection of the events
\be
\left\{
\chi^{(n)}_{\Omega}
\left( 
\left( \frac{ 4 | \Omega|}{\pi} \right)^{1/4} (x_\ell, y_\ell)
\right) \ge 1 
\right\},
\ee
\be
\left\{
\chi_{\Omega}
\left( 
\left( \frac{ 4 | \Omega|}{\pi} \right)^{1/4} (x_k, y_k)
\right) = 1 \text{ for all } 1\le \ell \le k -1
\right\},
\ee
and 
\be
\left\{
\chi_{\Omega}
\left( 
\left( \frac{ 4 | \Omega|}{\pi} \right)^{1/4} (y_{\ell-1}, x_\ell)
\right) = 0 \text{ for all } 1\le \ell \le k
\right\},
\ee
with the convention that $y_0 = 0 $. Using this representation, and the fact that $\chi_\Omega(S_1)$ and  $\chi_{\Omega}(S_2)$ are independent when the sets $S_1$ and $S_2$ are disjoint (since $\chi_{\Omega}$ is a Poisson process), we have 
\begin{multline}\label{tdist}
\lim_{n\rightarrow \infty}
\P(x_\ell <  \tilde \omega_{\ell} < y_\ell \text{ for all } 1 \le \ell \le k )=\\
\left(
1 - e^{-(y_k^4 - x_k^4)}
\right)
\prod_{\ell=1}^{k-1} ( y_\ell^4 - x_\ell^4) e^{-(y_\ell^4 - x_\ell^4)}  
\prod_{\ell=1}^k e^{(y_\ell^4 - x_\ell^4)}
 =\left( e^{-x^4_k} - e^{-y_k^4} \right) \prod_{\ell=1}^{k-1} ( y_\ell^4 - x_\ell^4).
\end{multline}

Next, we prove that 
\be\label{tildereplace}
\lim_{n \rightarrow \infty}
\P
(\exists \, \ell \le k \text{ such that } \tilde t_\ell \neq t_\ell ) =0.
\ee
Let $p_\ell < q_\ell$ denote the indices such that $| \lambda_{p_\ell} - \lambda_{q_\ell} | = t_\ell$. To show \eqref{tildereplace}, it suffices to show that the probability that the $2k$ eigenvalues $( \lambda_{p_\ell}, \lambda_{q_\ell})_{\ell = 1}^k$ are distinct tends to $1$ as $n\rightarrow \infty$.
Using the notation defined before \Cref{l:bunching}, we find that for every $M > 0$, 
\begin{align}
\P
(\exists \, \ell \le k \text{ such that } \tilde t_\ell \neq t_\ell ) &\le
\P \big(\Xi^{(3)} (\mathcal B_M) \neq 0 \big)
+ 
\P ( \tilde t_k > M n^{-3/4} /2 ) \\
& \le 
\E \big[\Xi^{(3)} (\mathcal B_M) \big]
+ \P ( \tilde t_k > M n^{-3/4} /2 ).
\end{align}
By \Cref{l:bunching}, 
\be
\lim_{n\rightarrow \infty} 
\E\big[ \Xi^{(3)} (\mathcal B_M)\big] = 0.
\ee
Therefore 
\be\label{M1}
\limsup_{n \rightarrow \infty}
\P
(\exists \, \ell \le k \text{ such that } \tilde t_\ell \neq t_\ell ) \le \limsup_{n \rightarrow \infty} \P ( \tilde t_k > M n^{-3/4} /2 ),
\ee
and we conclude the left-hand side of \eqref{M1} is zero by taking $M$ to infinity and using \eqref{tdist}.
\end{proof}

\section{Proofs of Main Lemmas}\label{s:proof_main_lemmas}

We begin with the proof of \Cref{l:bunching}, since it is used in the proof of \Cref{l:33}. We then prove \Cref{l:33} and \Cref{l:integration}. We use the convention that the letters $C$ and $c$ denote positive constants that may change line to line. 

\subsection{Proof of Lemma~\ref{l:bunching}}

\begin{proof}[Proof of \Cref{l:bunching}]
For all $z\in \C$ and $r\in \R^+$, let
\[
D(z,r) = \{ w \in \C : |z - w | < r\}.
\]
Set $M_n = M n^{-3/4}$. 
We first note that, by the identification of the correlation functions for $W_n$ in \Cref{r:23}, and \Cref{l:detapprox}, we have 
\begin{align}
\E \big[ \Xi^{(3)}(\mathcal B_M) \big]
&= n^3 \int_{\Omega} \int_{D(\lambda, M_n)^2}
\rho_3(\sqrt{n} \lambda, \sqrt{n} z_1 , \sqrt{n} z_1) \, dz_1 \, dz_2\, d\lambda\\
&= n^3 \int_{\Omega} \int_{D(\lambda, M_n)^2}
\det Q(\lambda, z_1, z_2) \, dz_1 \, dz_2\, d\lambda + O(e^{-cn}), \label{recall0}
\end{align}
where we abbreviate $Q(w_1, w_2, w_3) = Q^{(3)}(w_1, w_2, w_3)$. 
Here, and for the rest of this proof, the implicit constants in the asymptotic $O$ notation depend only on $A$ and $\Omega$. 
We note that 
\begin{align}
\det Q(\lambda , z_1 , z_2  ) =&
Q_{11} ( Q_{22} Q_{33} - Q_{23} Q_{32})\label{term1} -  Q_{12}( Q_{21} Q_{33} -Q_{23}  Q_{31} ) \\
&+ Q_{13} ( Q_{21} Q_{32} - Q_{31} Q_{22}  ).
\end{align}
We begin by bounding the first term in \eqref{term1}. By \Cref{l:23}, we have $Q_{11} = O(1)$, so we focus on the difference $Q_{22} Q_{33} - Q_{23} Q_{32}$. 

Using \Cref{l:close} with $r = \tau$, we have 
\begin{align}\label{q22}
Q_{22} &= \frac{\iu e^{(-n/2)(z_1 - \bar z_1)^2 -n (\Im(z_1)^2 + \Im(z_1)^2)} }{2\pi \sqrt{  \Im(z_1) \Im(z_1)}} (\bar z_1 - z_1) + O(n^{-1})= \frac{1}{ \pi} + O(n^{-1}),
\end{align}
and similarly
\begin{equation}\label{q33}
Q_{33} = \frac{1}{ \pi} + O(n^{-1}).
\end{equation}
Further, 
\begin{align}\label{q23}
Q_{23} Q_{32} = 
\frac{ e^{(-n/2)(z_1 - \bar z_2)^2 + (-n/2)(z_2 - \bar z_1)^2 -2n (\Im(z_1)^2 + \Im(z_2)^2)} }{4\pi   \Im(z_1) \Im(z_2)} |\bar z_1 - z_2|^2  + O(n^{-1}).
\end{align}
Define $u_1, u_2 \in \C$ by the relations
\begin{equation}\label{eq:z_decomp_trick}
z_1 = \lambda + n^{-3/4} u_1, \qquad z_2 = \lambda + n^{-3/4} u_2.
\end{equation}
Note that 
\begin{equation}\label{z1z2}
z_1 - \bar z_2 = z_1 - \bar z_1  + n^{-3/4} (\bar u_1 - \bar u_2) = 2 \mathrm{i} \Im(z_1) + n^{-3/4} (\bar u_1 - \bar u_2)
\end{equation}
and similarly 
\begin{equation}\label{z2z1}
z_2 - \bar z_1 = 
 2 \mathrm{i} \Im(z_2) - n^{-3/4} (\bar u_1 - \bar u_2).
\end{equation}
Using \eqref{z1z2}, \eqref{z2z1}, and $\Im(z_1) - \Im(z_2) = O(n^{-3/4})$, we obtain
\begin{align}
-&\frac{n}{2}(z_1 - \bar z_2)^2 - \frac{n}{2} (z_2 - \bar z_1)^2 -2n \big(\Im(z_1)^2 + \Im(z_2)^2\big)\\
&= 
2 \iu n^{1/4}  \big(\Im(z_2) - \Im(z_1) \big)(\bar u_1 - \bar u_2)
+ O(n^{-3/2}) = O(n^{-1/2}).\label{use1}
\end{align}
Additionally, we have
\begin{equation}\label{use2}
|\bar z_1 - z_2|^2 = 4 \Im(z_1) \Im(z_2) + O(n^{-3/4}).
\end{equation}
Inserting \eqref{use1} and \eqref{use2} into \eqref{q23}, we obtain
\begin{equation}\label{q23final}
Q_{23} Q_{32} = 
\frac{ \exp\big(  O(n^{-1/2}) \big) }{\pi}  + O(n^{-3/4}) = \frac{1}{\pi} +  O(n^{-1/2}).
\end{equation}
To control the error terms in \eqref{q23final}, we used the fact that
\begin{equation}
\sup_{z,w \in \Omega : |z -w | < \tau n^{-3/4}}
\big|
U(z,w)
\big| \le C,
\end{equation}
which follows from \Cref{l:23} and \Cref{l:close}.

Combining \eqref{q22}, \eqref{q33}, and \eqref{q23final}, and recalling that we have $Q_{11} = O(1)$ from \Cref{l:23}, we conclude that 
\be\label{detterm1}
Q_{11} ( Q_{22} Q_{33} - Q_{23} Q_{32})= O(n^{-1/2}).
\ee 
Parallel reasoning (which we omit) yields 
\begin{equation}\label{detterm2}
 Q_{12}( Q_{21} Q_{33} - Q_{23} Q_{31} ) = O(n^{-1/2}), \qquad
 Q_{13} ( Q_{21} Q_{32} - Q_{31} Q_{22}  )  = O(n^{-1/2}).
\end{equation}
Inserting \eqref{detterm1} and \eqref{detterm2} into \eqref{term1}, and using \eqref{recall0}, we have 
\begin{equation}\label{this0}
\E \big[ \Xi^{(3)}(\mathcal B_M) \big]
= n^3 \int_{\Omega} \int_{D(\lambda, M_n)^2} O(n^{-1/2}) \, dz_1 \, dz_2\, d\lambda =  O(n^{-1/2}).
\end{equation}
In the last equation, we use that the area of $D(\lambda, M_n)$ is $O(n^{-3/2})$. 
Finally, \eqref{this0} implies \eqref{limE}, which finishes the proof.
\end{proof}

\subsection{Proof of Lemma~\ref{l:33}}
Given \Cref{l:bunching}, the following proof is essentially the same as the first part of the proof of \cite[Lemma 3.3]{shi2012smallest}. We give full details for completeness. 
\begin{proof}[Proof of \Cref{l:33}]
Fix a constant $\tau>0$ such that $A\subset (0, \tau)$ and set $\tau_n = n^{-3/4} \tau$ for all $n \in \N$.  For all $z\in \C$ and $r\in \R^+$, let
\[
D^+(z,r) = \{ w \in \C : |z - w | < r,  z \prec w \}.
\]

We begin by showing that for all indices $i$ such that $\lambda_i \in \Omega$ and ${i^*} \neq 0$, if 
\begin{equation}\label{indicator1}
\one_{\{\lambda_{i^*} - \lambda_i \in B_n \}}   \neq 
\one_{\{\xi^{(n)}(\lambda_i + B_n) = 1 \}},
\end{equation}
then $\xi^{(n)}(D^+(\lambda_i, \tau_n)) \ge 2$. First, suppose that the left-hand side of \eqref{indicator1} is $1$, while the right-hand side is $0$. In this case, $\xi^{(n)}(\lambda_i + B_n) \ge 2$, which implies $\xi^{(n)}(D^+(\lambda_i, \tau_n)) \ge 2$ by the definition of $B_n$. Next, suppose that the left-hand side of \eqref{indicator1} is $0$, while the right-hand side is $1$. Then  there exists some $j\neq i^*$ such that $\lambda_j \in \Omega$ and $\lambda_j \in \lambda_i + B_n \subset D^+(\lambda_i, \tau_n)$.  
Since $|\lambda_i -  \lambda_{i^*} | \le | \lambda_i - \lambda_j|$ by the definition of $i^*$, we have $\xi^{(n)}(D^+(\lambda_i, \tau_n)) \ge 2$. 
We conclude that \eqref{indicator1} implies $\xi^{(n)}(D^+(\lambda_i, \tau_n)) \ge 2$. We also note that if ${i^*} = 0$, then the term corresponding to $\lambda_i$ vanishes in both of the sums $\chi^{(n)}_{\Omega}(A )$ and $\tilde \xi^{(n)}(\Omega)$. 

From the previous paragraph, we see that 
\begin{align}\label{pointbdd}
\big|
\chi^{(n)}_{\Omega}(A )
- \tilde \xi^{(n)}(\Omega)\big|
& \le \sum_{i=1}^{n-1}
\one_{\{\xi^{(n)}(D^+(\lambda_i, \tau_n)) \ge 2\}}  \le \Xi^{(3)}(\mathcal E),
\end{align}
where $\Xi^{(3)}$ denotes the $3$-point measure defined in \eqref{xi}
and we define
\[
\mathcal E  = \{ (\lambda, z_1, z_2) : \lambda \in \Omega, (z_1, z_2) \in D^+(\lambda, \tau_n)^2\}.
\]
By \Cref{l:bunching} and the inclusion $\mathcal E \subset \mathcal B_{\tau}$, we have
\begin{equation}\label{limE}
\limsup_{n \rightarrow \infty} 
\E\big[\Xi^{(3)}(\mathcal E)\big] \le \limsup_{n \rightarrow \infty} 
\E\big[\Xi^{(3)}(\mathcal B_{\tau} )\big] = 0.
\end{equation}
By Markov's inequality and \eqref{pointbdd}, this implies \eqref{l33conclusion} and completes the proof.
\end{proof}

\subsection{Proof of Lemma~\ref{l:integration}}
We begin by recalling the following lemma from \cite[Theorem 7.8.5]{horn2012matrix}. 
\begin{lemma}\label{l:pd}
Let $M$ be an $n\times n$ positive-definite Hermitian matrix. For any $\mathcal I \subset \{1,2, \dots, n\}$, let $M_{\mathcal I}$ be the submatrix of $M$ formed by the rows and columns with indices in $\mathcal I$. We have $\det(M) \le \det (M_{\mathcal I} ) \det(M_{\mathcal I^c})$. 
\end{lemma}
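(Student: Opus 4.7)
The plan is to prove this classical inequality (Fischer's inequality) via the Schur complement. Since the determinant is invariant under simultaneous row--column permutations, I may assume without loss of generality that $\mathcal I = \{1, \ldots, k\}$ with $k = |\mathcal I|$, and write $M$ in block form
\[
M = \begin{pmatrix} A & B \\ B^{*} & C \end{pmatrix}, \qquad A = M_{\mathcal I}, \qquad C = M_{\mathcal I^{c}}.
\]
Every principal submatrix of a positive-definite matrix is positive definite: apply the variational criterion $v^{*} M v > 0$ to vectors supported on the chosen index set. Hence $A$ and $C$ are both positive definite, and in particular $A$ is invertible.

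Next I would invoke the Schur complement identity
\[
\det(M) \;=\; \det(A)\,\det\!\bigl(C - B^{*} A^{-1} B\bigr).
\]
The heart of the argument is to show $\det(C - B^{*}A^{-1}B) \le \det(C)$. Since $A^{-1}$ is positive definite, the matrix $B^{*} A^{-1} B$ is positive semi-definite, so $C - B^{*} A^{-1} B \preceq C$ in the Loewner order. Both matrices are positive definite (the Schur complement of a positive-definite matrix is itself positive definite). I would then appeal to monotonicity of the determinant on the positive-definite cone: if $0 \prec X \preceq Y$, then $Y^{-1/2} X Y^{-1/2} \preceq I$ is Hermitian with all eigenvalues in $(0,1]$, so $\det(Y^{-1/2} X Y^{-1/2}) \le 1$, which rearranges to $\det(X) \le \det(Y)$. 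Applied to $X = C - B^{*} A^{-1} B$ and $Y = C$, this yields
\[
\det(M) \;=\; \det(A)\,\det\!\bigl(C - B^{*} A^{-1} B\bigr) \;\le\; \det(A)\,\det(C) \;=\; \det(M_{\mathcal I})\,\det(M_{\mathcal I^{c}}).
\]

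There is no real obstacle here; the statement is classical (Fischer's inequality, the two-block specialization of Hadamard--Fischer). The only step requiring a moment's thought is the monotonicity of $\det$ on the positive-definite cone, and that is dispatched by the simultaneous-diagonalization argument sketched above. An alternative route, if one prefers to avoid invoking the Schur complement formula directly, would be to induct on $|\mathcal I^{c}|$: the base case $|\mathcal I^{c}| = 1$ reduces to bounding the determinant of a rank-one perturbation, and the inductive step follows by peeling off one coordinate at a time.
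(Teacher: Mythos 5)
Your proof is correct and complete. The paper does not prove this lemma at all; it simply quotes it as \cite[Theorem 7.8.5]{horn2012matrix} (Fischer's inequality), and your Schur-complement argument --- $\det(M)=\det(A)\det(C-B^{*}A^{-1}B)$ combined with monotonicity of the determinant on the positive-definite cone --- is precisely the standard textbook proof of that result, so there is nothing to compare beyond noting that every step you use (positive definiteness of principal submatrices, positive definiteness of the Schur complement, and the simultaneous-diagonalization argument for $0 \prec X \preceq Y \Rightarrow \det X \le \det Y$) is justified correctly.
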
 

The next lemma shows that $Q^{(k)}(z_1,\dots, z_k)$ is positive-definite, or exponentially small, for all but an asymptotically vanishing set of $(z_1, \dots, z_k)$. We will use it  in conjunction with \eqref{l:pd} to bound the correlation functions $\tau_k$. It is proved in \Cref{s:auxiliary}. 

\begin{lemma}\label{l:pdset}
Fix an admissible domain $\Omega$. For all $k\in \N$, there exist constants $C_k (\Omega), c_k(\Omega) > 0$ such that the following holds. Define
\be
 \mathcal{C}_k = \{ \bs{z} \in \Omega^k :  Q^{(k)} (\bs z ) \text{ is not positive definite} \land |n^k Q_k( \bs z)| \geq C_k e^{- c_k n} \},
\ee and let $\mu$ denote Lebesgue measure on $\C^k$. Then
    \begin{equation}
        \mu(\mathcal{C}_k) < C_k e^{- c_k n}.
    \end{equation}
\end{lemma}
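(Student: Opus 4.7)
The plan is to exploit the coherent-state structure of the approximating kernel to show that failures of positive-definiteness of $Q^{(k)}$ are confined to an exponentially small set. First, I would factor the separable weight $G(\sqrt n z,\sqrt n w) = g(z)g(w)$ with $g(z) = \sqrt{\operatorname{erfc}(\sqrt{2n}\Im z)} > 0$ and invoke the asymptotic $s_n(nz\bar w) = 1 + O(e^{-cn})$ (uniform for $z\bar w$ away from the unit circle, by a standard saddle-point analysis) to write $Q^{(k)}(\bs z) = D(\bs z) M(\bs z) D(\bs z) + E(\bs z)$, where $D$ is a positive diagonal matrix, $M(\bs z) = (U(z_i, z_j))_{i, j = 1}^k$ with $U$ as in Lemma~\ref{l:close}, and $\|E(\bs z)\|_{\mathrm{op}} \le C e^{-cn}$. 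Since $D$ has positive entries, positive-definiteness of $Q^{(k)}$ then follows from that of $M$ whenever $\lambda_{\min}(M) \gg e^{-cn}$.

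Next, applying a diagonal unitary gauge $\operatorname{diag}(e^{\iu n x_i y_i})$ (with $z_i = x_i + \iu y_i$), I transform $M(\bs z)$ into an equivalent matrix with entries $V(z_i, z_j) = \iu(\bar z_j - z_i)/(2\pi\sqrt{\Im z_i \Im z_j}) \cdot \langle \sqrt n\,z_j \mid \sqrt n\,z_i\rangle$, where $\langle \cdot \mid \cdot \rangle$ denotes the Bargmann--Fock coherent-state inner product. Using the canonical commutation relation $\iu(\bar w - z)\langle \sqrt n\,w \mid \sqrt n\,z\rangle = (\sqrt 2/\sqrt n)\langle \sqrt n\,w \mid P \mid \sqrt n\,z\rangle$, where $P = \iu(a^\dagger - a)/\sqrt 2$ is the momentum operator, the quadratic form associated with $V$ is identified with a positive scalar times $\langle \psi \mid P \mid \psi\rangle$, where $\psi = \sum_i (c_i/\sqrt{\Im z_i}) \mid \sqrt n\,z_i\rangle$.

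I would then establish the quantitative lower bound $\lambda_{\min}(M(\bs z)) \ge c(\Omega, k)\min_{i \ne j}(1 - e^{-cn|z_i - z_j|^2})$. Each coherent state with $\Im z > 0$ has positive momentum expectation $\sqrt{2n}\,\Im z$, and overlaps satisfy $|\langle \sqrt n\,z \mid \sqrt n\,w\rangle| = e^{-(n/2)|z - w|^2}$; exploiting the geometric identity $|\bar w - z|^2 = |z - w|^2 + 4\Im z\, \Im w$ and iterating the two-point AM--GM estimate on cross-terms yields the claim. Combining these: if $Q^{(k)}(\bs z)$ fails positive-definiteness then $\lambda_{\min}(M) \le Ce^{-cn}$, which forces some pair to satisfy $|z_i - z_j|^2 \le C'e^{-c'n}/n$; on this set, $|\det M| \le \lambda_{\min}(M)\|M\|^{k-1} \le C''e^{-c''n}$, so $|n^k \det Q^{(k)}(\bs z)| \le C'''e^{-c'''n}$, placing $\bs z$ outside $\mathcal C_k$. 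Hence $\mathcal C_k \subseteq \{\bs z \in \Omega^k : \min_{i \ne j}|z_i - z_j| \le C_* e^{-c_* n/2}/\sqrt n\}$, whose Lebesgue measure is $O(e^{-c_* n})$.

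The main obstacle will be the quantitative positivity bound in the third step: while each coherent state contributes positive momentum expectation, the operator $P$ has spectrum $\R$, so one must rule out large negative contributions from cross-terms in $\langle \psi \mid P \mid \psi\rangle$. The two-point case reduces elegantly to the AM--GM inequality $|\bar w - z|^2 \ge 4\Im z\, \Im w$; extending this to $k \ge 3$ with the quantitative precision required to obtain the exponential measure estimate for $\mathcal C_k$ is the technical heart of the argument.
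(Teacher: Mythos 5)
Your reduction to the matrix $M(\bs z)=(U(z_i,z_j))_{i,j}$ and the coherent-state/gauge identification are sound (and the two-point computation $|\bar w-z|^2=|z-w|^2+4\Im z\,\Im w$ does give positive semi-definiteness for $k=2$ and large $n$). The genuine gap is the quantitative positivity claim in your third step, which is both unproven and false as stated for $k\ge 3$. Take $k=3$ points clustered at mutual distance $\delta$ with $\eps^2:=n\delta^2$ small. After your gauge transformation, $M$ is $\pi^{-1}$ times the Gram matrix $G$ of three coherent states at mutual Bargmann distance $\eps$, up to entrywise corrections of order $n^{-1/2}\eps$ from the prefactor. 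A second divided difference (coefficients of size $\eps^{-2}$ annihilating the zeroth- and first-order terms of the analytic map $v\mapsto|v\rangle$) shows $\lambda_{\min}(G)\lesssim\eps^{4}$, and more generally $\eps^{2(k-1)}$ for $k$ clustered points; your claimed lower bound is $c\min_{i\ne j}(1-e^{-cn|z_i-z_j|^2})\asymp c\eps^2$, which exceeds the true smallest eigenvalue. The failure is structural: near-coincident configurations of three or more points make $M$ nearly rank one, so positivity is a collective phenomenon that cannot be reached by ``iterating the two-point AM--GM estimate.'' Nor does the coherent-state picture supply positivity for free: $P$ has spectrum $\R$, and the prefactor matrix $\bigl((y_i+y_j-\iu(x_i-x_j))/\sqrt{y_iy_j}\bigr)_{i,j}$ is congruent to $(y_i+y_j-\iu(x_i-x_j))_{i,j}$, which is indefinite, so a Schur-product argument also fails. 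A related soft spot is your reduction $Q^{(k)}=DMD+E$ with $\|E\|_{\mathrm{op}}\le Ce^{-cn}$: the asymptotics for $\operatorname{erfc}$ and $s_n$ carry $O(n^{-1})$ multiplicative errors, so unless $D$ and $M$ are defined with the exact $\operatorname{erfc}$ factors, transferring positive-definiteness from $M$ to $Q^{(k)}$ requires $\lambda_{\min}(M)\gg n^{-1}$, which fails exactly in the clustered regime at the heart of the problem.

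The paper avoids proving any direct positivity of $M$. Instead it uses the probabilistic fact that correlation functions are nonnegative: by Sylvester's criterion, if $Q^{(k)}(\bs z)$ is not positive definite then some leading principal minor $Q^{(m)}(z_1,\dots,z_m)$ has $\det\le 0$; since $\tau_m\ge 0$ and $|\tau_m-n^m\det Q^{(m)}|\le Ce^{-cn}$ (Lemma~\ref{l:detapprox}), this traps $\tau_m(z_1,\dots,z_m)\le Ce^{-cn}$. Integrating out the remaining variables (Lemma~\ref{l:integrate_out_lemma}) and applying a Markov-type inequality then shows that $\tau_k$, hence $n^k\det Q^{(k)}$, exceeds $e^{-\hat c n/2}$ only on a set of measure $O(e^{-\hat cn/2})$. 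If you want to rescue your approach, you would need to prove that the compression of $P$ to spans of upper-half-plane coherent states is positive with a lower bound that is at least polynomial in $\min_{i\ne j}n|z_i-z_j|^2$ (with $k$-dependent exponent); absent that, the argument does not close.
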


We require the following consequence of \Cref{l:pdset}.

\begin{lemma}\label{l:pdset2}
Fix an admissible domain $\Omega$. 
For all $k,m\in \N$ with $m > k$, there exist constants $ C_{k,m} (\Omega),  c_{k,m}(\Omega) > 0$ such that the following holds. For all $z_1, \dots, z_k \in \Omega$ and $m > k$, define 
\be
 \mathcal G_{k,m} (z_1, \dots, z_k)
= 
\big \{ 
(z_{k+1}, \dots , z_{m} ) \in \Omega^{m-k} :
(z_1, \dots, , z_m) \in
\mathcal C_{m}
\big \},
\ee
and 
\be
\mathcal C_{k,m} = 
\Big
\{ 
(z_1, \dots, z_k) \in \Omega^k :
\mu\big( \mathcal G_{k,m} (z_1, \dots, z_k) \big) > C_{k,m} e^{- c_{k,m} n}
\Big \}.
\ee
Then 
\begin{equation}
    \mu(\mathcal{C}_{k,m}) < C_{k,m} e^{- c_{k,m} n} .
\end{equation}
\end{lemma}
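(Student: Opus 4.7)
The plan is to derive Lemma \ref{l:pdset2} as a straightforward Fubini--Markov consequence of Lemma \ref{l:pdset}. Let $C_m(\Omega), c_m(\Omega) > 0$ be the constants produced by Lemma \ref{l:pdset} applied with parameter $m$, so that $\mu(\mathcal{C}_m) < C_m e^{-c_m n}$.

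First, I would unfold the definition of $\mathcal{G}_{k,m}$ to observe that
\begin{equation*}
\mathcal{C}_m = \{(z_1,\dots,z_m) \in \Omega^m : (z_{k+1},\dots,z_m) \in \mathcal{G}_{k,m}(z_1,\dots,z_k) \},
\end{equation*}
so that Fubini's theorem yields
\begin{equation*}
\mu(\mathcal{C}_m) = \int_{\Omega^k} \mu\bigl(\mathcal{G}_{k,m}(z_1,\dots,z_k)\bigr) \, dz_1 \cdots dz_k.
\end{equation*}
Next, I would apply Markov's inequality to the non-negative function $(z_1,\dots,z_k) \mapsto \mu(\mathcal{G}_{k,m}(z_1,\dots,z_k))$: for any threshold $t > 0$,
\begin{equation*}
\mu\bigl(\{(z_1,\dots,z_k) : \mu(\mathcal{G}_{k,m}(z_1,\dots,z_k)) > t\}\bigr) \le \frac{\mu(\mathcal{C}_m)}{t} \le \frac{C_m e^{-c_m n}}{t}.
\end{equation*}

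Finally, I would choose the constants in the statement to absorb the factor $1/t$ into the desired exponential bound. Specifically, set $c_{k,m} = c_m/2$ and $C_{k,m} = \max(1, \sqrt{C_m})$, and take $t = C_{k,m} e^{-c_{k,m} n}$. Then
\begin{equation*}
\mu(\mathcal{C}_{k,m}) \le \frac{C_m e^{-c_m n}}{C_{k,m} e^{-c_{k,m} n}} = \frac{C_m}{C_{k,m}} e^{-c_{k,m} n} \le C_{k,m} e^{-c_{k,m} n},
\end{equation*}
where the last inequality uses $C_m/C_{k,m} \le C_{k,m}$ by the choice $C_{k,m}^2 \ge C_m$.

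There is no real obstacle: the lemma is a measure-theoretic packaging of Lemma \ref{l:pdset}, saying that the slice measure of the exceptional set $\mathcal{C}_m$ is itself exponentially small except on an exponentially small set of ``base points'' $(z_1,\dots,z_k)$. The only thing to verify carefully is that the constants can be chosen uniformly in $n$, which the two-layer bookkeeping above ($c_{k,m} = c_m/2$ buys an exponential margin that then absorbs the $1/C_{k,m}$ factor) handles cleanly.
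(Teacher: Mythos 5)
Your proposal is correct and takes essentially the same route as the paper's proof: Fubini's theorem to express $\mu(\mathcal C_m)$ as the integral of the slice measures $\mu\big(\mathcal G_{k,m}(z_1,\dots,z_k)\big)$ over $\Omega^k$, a Markov-type bound at the threshold $C_{k,m}e^{-c_{k,m}n}$, and the same constant bookkeeping ($c_{k,m}=c_m/2$, $C_{k,m}^2 \ge C_m$) to absorb the loss. Your indexing is in fact cleaner than the paper's write-up, which misprints the set $\mathcal C_m$ (with constants $C_m, c_m$ from the previous lemma applied at level $m$) as $\mathcal C_k$ in its display.
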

\begin{proof}
We consider $C_{k,m}, c_{k,m}$ as parameters that will be fixed at the end of the proof. By Fubini's theorem,
\be
\mu(\mathcal C_k) = \int_{\mathcal C_k} \, d \bs z
\ge \int_{\mathcal C_{k,m}} \int_{\mathcal G_{k,m} } \, d \bs z 
\ge \int_{\mathcal C_{k,m}} C_{k,m} e^{- c_{k,m} N} 
= \mu (\mathcal C_{k,m} ) C_{k,m} e^{- c_{k,m} N} .
\ee
We conclude from \Cref{l:pdset} that 
\be
\frac{C_k}{C_{k,m}} e^{- (c_k - c_{k,m} ) n}  \ge \mu (\mathcal C_{k,m} ).
\ee 
The conclusion follows after choosing $c_{k,m}$ such that $c_k > 2 c_{k,m}$ and $C_{k,m}$ such that $C_{k,m}^2 > C_k$. 
\end{proof}

We also recall the well-known error function asymptotic
\begin{equation}\label{erf}
\operatorname{erfc}(x) = \frac{e^{-x^2}}{\sqrt{\pi} x} 
\left( 1 + O(x^{-2}) \right),
\end{equation}
which holds as $x \rightarrow \infty$.

\begin{proof}[Proof of \Cref{l:integration}(1)]

By a standard computation using the  inclusion--exclusion principle (see \cite[(4.5)]{soshnikov2000determinantal}), we have 
\begin{multline}\label{pie}
    \tilde{\tau}_k(z_1, \ldots, z_k) = \sum^\infty_{m = 0}
        \frac{(-1)^m}{m!} \int_{z_1 + B_n} dx_1 \ldots \int_{z_k + B_n} dx_k \\
        \int_{((z_1 + B_n) \cup \ldots \cup (z_k + B_n))^m} \tau_{2k+m}(z_1, x_1, \ldots, z_k, x_k, y_1, \ldots, y_m)\,  dy_1 \ldots dy_m.
\end{multline}
We note that \cite[(4.5)]{soshnikov2000determinantal} requires that $z_i \notin z_j + B_n$ for all $i\neq j$, 
which is true for sufficiently large $n$ by the assumption that $(z_1, \dots, z_k)$ has pairwise distinct entries.

We begin by analyzing the $m=0$ term, which is
\begin{equation}
     \int_{z_1 + B_n} \dots \int_{z_k + B_n} \tau_{2k}(z_1, x_1, 
     \ldots, z_k, x_k) \, dx_1 \dots dx_k.
\end{equation}
By \eqref{taukdef} and \Cref{l:detapprox}, we have uniformly for all $z_1, x_1, \dots, z_k, x_k \in \Omega$ that
\begin{equation}\label{pear}
  \tau_{2k}(z_1, x_1, \ldots, z_k, x_k, y_1, \ldots, y_m) =   n^{2k} \det Q^{(2k)}(z_1, x_1, \ldots, z_k, x_k) + O(e^{-cn}).
\end{equation}
Observe that $Q^{(2k)}$ is $2k \times 2k$ matrix that can be written as a $k\times k$ matrix of $2 \times 2$ blocks $( \tilde Q_{ij} )_{1 \le i ,j \le k}$, where the $\tilde Q_{ij}$ have the form
\begin{align*}
    \tilde Q_{ij}=\begin{pmatrix}
        S_n\left(\sqrt{n} z_i, \sqrt{n} z_j \right)  & S_{n}\left( \sqrt{n} x_i, \sqrt{n} z_j \right) \\
        S_{n}\left( \sqrt{n} z_i, \sqrt{n} x_j\right) & S_n\left(\sqrt{n} x_i, \sqrt{n}x_j \right)
    \end{pmatrix}.
\end{align*}
For $1\le a , b \le 2$, we will will use $\tilde Q_{ij}(a ,b )$ to denote the $(a,b)$-th entry in the
$(i,j)$-th block of $Q^{(2k)}$.

We will analyze the diagonal and off-diagonal blocks separately. 
Beginning with the off-diagonal blocks where $i \neq j$, we claim that there exist constants $C, c>0$, depending only on $\Omega$ and the values $z_1, \dots, z_k$, such that 
\begin{equation}\label{offdiagonalbound}
    \max_{i \neq j} \max_{a, b\leq 2} \tilde Q_{ij}(a, b) \leq Ce^{-cn}.
\end{equation}
We will consider only the case $a = b  = 1$ in detail, since the others are nearly identical.

By \eqref{erf}, uniformly for all $z_i,z_j \in \Omega$, we have the asymptotic
expansion
\be\label{SNasymptotic2}
    S_n ( \sqrt{n} z_i , \sqrt{n} z_j) = 
    U(z_i,z_j) s_{n}(n z_i \bar z_j)
    \left( 1 + O(n^{-1}) \right).
\ee 
By \cite[Lemma 4.1]{goel2023central}, 
there exists a constant $C(\tilde d_\Omega)>0$ such that for all $z_i,z_j\in \Omega$,
\begin{equation}\label{e:original2}
s_{n}(n z_i \bar z_j)
=
1 -
\frac{1}{\sqrt{2 \pi n}}
\frac{( z_i \bar z_j  e^{1-z_i \bar z_j})^n}{1-z_i\bar z_j}
\big( 1 + R(z_i\bar z_j;n) \big), \qquad  \big|R(z_i\bar z_j; n)\big|\leq C n^{-1}.
\end{equation}
Inserting this estimate into \eqref{SNasymptotic2}, we find 
\begin{align}
    |S_n(\sqrt{n} z_i, \sqrt{n} z_j)| \leq 
        C e^{-(n/2) \Re (z_i - \bar {z}_j)^2}  e^{-n (\Im(z_i)^2 + \Im (z_j)^2)} \\
    \times 
    \left(
        1 + C \left| e^{-2 (1 - z_i \bar z_j)} 
        (z_i \bar z_j)^n \right| 
    \right). 
\end{align}
Next, we note that there exists a constant $c(z_1, \dots, z_n) > 0$ such that  
\begin{equation}
    e^{-(n/2) \Re (z_i - \bar z_j)^2} e^{-n (\Im(z_i)^2 + \Im(z_j)^2)} = e^{-(n/2) |z_i - z_j|^2} \le e^{- cn },
\end{equation}
and 

\begin{align}
    \left|e^{-(n/2) \Re(\lambda - \bar z_j)^2} e^{-n (\Im(z_i)^2 + \Im(z_j)^2)}\right| 
    &\leq e^{(n/2) (- |z_i - \bar z_j|^2 + 2 + 2 \Re(z_i \bar z_j) - 2 \ln |z_i \bar z_j|} \\
    &= e^{-(n/2) (|z_i|^2 + |z_j|^2 - 2 - \ln |z_i|^2 - \ln |z_j|^2 ))} \\
    & \leq e^{-cn},
\end{align}
where the last inequality follows from $|z|^2 - 1 - \ln |z|^2 \geq c >  0$ for $z \in \Omega$ (which holds by the convexity of $x\mapsto \ln x$).
This completes the proof of \eqref{offdiagonalbound}.

We now consider the diagonal blocks, corresponding to the case $i = j$. By \Cref{l:close}, these take the form
\begin{equation}\label{eq:sec4_diag_block_Sn}
    \begin{pmatrix}
        S_n(z_i, z_i) & S_n(z_i, x_i) \\
        S_n(x_i, z_i) & S_n(x_i, x_i)
    \end{pmatrix}
    =   \begin{pmatrix}
        U(z_i, z_i) & U(z_i, x_i) \\
        U(x_i, z_i) & U(x_i, x_i)
    \end{pmatrix} + O(n^{-1}),
\end{equation}
where the additive error term denotes a matrix such that each entry is $O(n^{-1})$.  Since $|U(z,w)| \le C$ uniformly for $z,w \in \Omega$, by \Cref{l:close} and \Cref{l:23}, we have 
\begin{equation}\label{thedet}
   \det  \begin{pmatrix}
        S_n(z_i, z_i) & S_n(z_i, x_i) \\
        S_n(x_i, z_i) & S_n(x_i, x_i)
    \end{pmatrix}
    =  \det  \begin{pmatrix}
        U(z_i, z_i) & U(z_i, x_i) \\
        U(x_i, z_i) & U(x_i, x_i)
    \end{pmatrix} + O(n^{-1}).
\end{equation}
We now compute the determinant on the right-hand side of \eqref{thedet}.
Beginning with the term coming from the diagonal of this block, by direct substitution, we observe that
\begin{equation}\label{det1}
    U(z_i, z_i) = U(x_i, x_i) = \frac{1}{\pi}.
\end{equation}
For the term coming from the cross-diagonal,
observe first that
\begin{multline}\label{eq:sec4_cross_diag_product_long}
    U(z_i, x_i) U(x_i, z_i) = \frac{-1}{4\pi^2 \Im(x_i) \Im(z_i)} \cdot (\bar{x}_i - z_i)(\bar{z}_i - x_i) \\ \times \operatorname{exp}\left( -\frac{n}{2} \left[(z_i - \bar{x}_i)^2 + (x_i - \bar{z}_i)^2\right] - 2n \left[\Im(z_i)^2 + \Im(x_i)^2\right] \right)
\end{multline}
Following the same strategy as in \eqref{eq:z_decomp_trick}, since $x_i \in z_i + B_n$, we will define $u_i \in \C$ by the equality
\begin{equation}
    x_i = z_i + n^{-3/4} u_i.
\end{equation}
We now return to \eqref{eq:sec4_cross_diag_product_long} and analyze each factor separately. First, we have
\begin{align}
\frac{-1}{4\pi^2 \Im(x_i) \Im(z_i)} & = \frac{-1 }{4\pi^2 \Im(z_i) \left( \Im(z_i) + n^{-3/4} \Im(u_i) \right)}\notag \\
 & = \frac{ - 1}{ 4 \pi^2 \Im(z_i)^2} + O(n^{-3/4}).
\end{align}
Secondly, we have
\begin{align}
    (\bar{x}_i - z_i)(\bar{z}_i - x_i) &= (-2i\Im(z_i) + n^{-3/4} \bar{u}_i) (-2i \Im(z_i) - n^{-3/4}u_i)\notag \\
    &= -4\Im(z_i)^2 + O(n^{-3/4}).
\end{align}
Next, in the exponent, we have
\begin{align}
    -\frac{n}{2}&\left((z_i - \bar{x}_i)^2 + (x_i - \bar{z}_i)^2\right)\notag \\ &= -\frac{n}{2} \left((2i \Im(z_i) - n^{-3/4}\bar{u}_i)^2 + (2i\Im(z_i) + n^{-3/4} u_i)^2\right)\notag \\
        &= -\frac{n}{2} \left(-8\Im(z_i)^2 - 8n^{-3/4} \Im(z_i) \Im(u_i) + n^{-3/2} (u_i^2 + \bar{u}_i^2)\right)\notag \\
        &= 4n\Im(z_i)^2  + 4n^{1/4} \Im(z_i) \Im(u_i) - n^{-1/2} (\Re(u_i)^2 - \Im(u_i)^2),\label{eq:SEC4_first_exponent_term}
\end{align}
and, for the other term in the exponent,
\begin{align}
    - 2n \left(\Im(z_i)^2 + \Im(x_i)^2\right) &= -2n \left( 2\Im(z_i)^2 + 2n^{-3/4} \Im(z_i)\Im(u_i) + n^{-3/2} \Im(u_i)^2 \right)\notag  \\
    &= -4n\Im(z_i)^2 - 4n^{-1/4} \Im(z_i) \Im(u_i)^2 - 2n^{-1/2} \Im(u_i)^2.\label{eq:SEC4_second_exponent_term}
\end{align}
Summing \eqref{eq:SEC4_first_exponent_term} and \eqref{eq:SEC4_second_exponent_term}, we see that
the exponent is
\begin{align}
     - n^{-1/2}(\Re(u_i)^2 - \Im(u_i)^2) -2n^{-1/2} \Im(u_i)^2 &=  - n^{-1/2}| u_i|^2 \\   &= - n| z_i - x_i|^2,
\end{align}
noting that $u_i = n^{3/4}(x_i - z_i)$. So, \eqref{eq:sec4_cross_diag_product_long} simplifies to
\begin{equation}\label{det2}
    U(z_i, x_i) U(x_i, z_i) =  \frac{1}{\pi^2} \exp\left( - n |z_i - x_i|^2\right) + O(n^{-3/4}).
\end{equation}
Using \eqref{det1} and \eqref{det2}, we compute the determinant on the right-hand side of \eqref{thedet} and find 
\begin{align}\label{turkey}
    \det \begin{pmatrix}
        U(z_i, z_i) & U(z_i, x_i) \\
        U(x_i, z_i) & U(x_i, x_i)
    \end{pmatrix} + O(n^{-1}) &=\pi^{-2} \left(1 - \exp(-n|z_i - x_i|^2)\right) + O(n^{-3/4}).
\end{align}

Returning to \eqref{pear}, and using \eqref{offdiagonalbound}, we see that the only non-negligible contributions to the determinant of $Q^{(2k)}$ are from the
$2\times 2$ blocks along the diagonal. Up to an additive error of $O(n^{-3/4})$, this is
\begin{equation}\label{basil}
    \prod^k_{i=1} \left( n^2 \pi^{-2} \int_{z_i + B_n} (1 - \exp(-n|z_i - x_i|^2) \, dx_i \right).
\end{equation}
It suffices to compute each integral from this product individually. Making the change of variable $u = n^{3/4}(x_i - z_i)$, we compute
\begin{align}
n^2 \int_{z_i + B_n} \big(1 - \exp(-n|z_i - x_i|^2\big) \, dx_i 
 &= n^{1/2} \int_{B} \big (1 - \exp(-n^{-1/2} |u|^2 \big) \, du \\
 &= \int_B |u|^2 \,du + O(n^{-1/2}).
\end{align}
We conclude that \eqref{basil} equals
\be
\left(\frac{1}{\pi^2} \int_B |u|^2 \,du \right)^k + O(n^{-1/2}).
\ee 
For future reference, we also note that \eqref{turkey} implies that 
\be\label{turkey2}
\sup_{z\in \Omega}  \left| \int_{z + B_n} n^2 \det Q^{(2)} (z, x )  \right| \le C.
\ee

It remains to show that the contributions from the terms  in \eqref{pie} with $m\ge 1$ are negligible. We begin with the indices $m \ge k + 5$. We can bound such a term by 
\begin{multline}\label{largem}
   \left| \frac{(-1)^m}{m!} \int_{z_1 + B_n}  \ldots \int_{z_k + B_n}  \int_{((z_1 + B_n) \cup \ldots \cup (z_k + B_n))^m} \tau_{2k+m} (z_1, x_1, \ldots, z_k, x_k, \bs y) \,  d \bs{y}  \, d \bs{x}\right| \\ 
   \le \frac{1}{m!} \cdot C^m k^m n^{-3m/2} \cdot C^k n^{-3k/2}\cdot C^{2k+m} n^{2k+m}  = \frac{C^{3k + 2m}}{m!} k^m n^{(k - m)/2} ,
\end{multline}
where we bound the area of integration of $\bs y$ by $(C k n^{-3/2})^m $, we bound the area of integration of $\bs x$ by $C^k n^{-3k/2}$, and we use the bound $\tau_{2k+m} \le C^{2k+m} n^{2k+m}$ coming from \eqref{pfcorr}, \Cref{r:23}, and \Cref{l:23}. (Observe that the $C$ in the last inequality is independent of $k$ and $m$.) We conclude that the sum of all terms in \eqref{pie} with $m > k+5$ is  $O(n^{-1/2})$. 

Next, we consider the terms with $m < k + 5$. Fix $m < k+5 $ and set $\ell = 2k+m$. We use the notation $C_{k,\ell}^{(n)}$ to make the dependence of the set $C_{k,\ell}$ on $n$ explicit. Using \Cref{l:pdset2}, we compute 
\be
\sum_{n=1}^\infty \mu\big( \mathcal C_{k,\ell}^{(n)} \big)
\le C_{k,\ell} \sum_{n=1}^\infty e^{- c_{k, \ell} n} < \infty.
\ee
Therefore, by the Borel--Cantelli lemma, for every $\bs z  \in \Omega^k$ (except a set of measure zero), there exists $n_0(\bs z)$ such that $\bs z \in  C_{k,\ell}^c$ for every $n \ge n_0$. Since we are proving an asymptotic statement, we suppose $n \ge n_0$ for the rest of the proof. Then, since $\mathcal G_{k,\ell}(z_1, \dots, z_{k})$ is exponentially small for such $\bs z \in \Omega^k$ (by the definition of $\mathcal C_{k, \ell}$), it remains to bound the integral over $\mathcal G_{k,\ell}^c(z_1, \dots, z_{k})$ in the $m$-th term of \eqref{pie}. 

Define  
\be
\mathcal{A}_\ell = \{ (z_{k+1}, \dots, z_{\ell} \in \Omega^{\ell - k} : Q^{(\ell)}({\bf{z}}) \text{ is positive definite} \},
\ee
and set $\mathcal B_{\ell} = \mathcal G_{k,\ell}^c(z_1, \dots, z_{k}) \setminus \mathcal A_\ell$. By the definition of $\mathcal G_{k,\ell}$, for  $(z_{k+1}, \dots, z_{\ell} ) \in \mathcal B_{\ell}$ we have 
\be\label{Bbound}
n^\ell \big  | \det Q^{(\ell)} (z_1, \dots,z_{\ell})
\big|  \le C e^{-cn}
\ee 
for some constants $C,c>0$ depending only on $k$ and $\Omega$. In particular, the constants are uniform in $\ell$ (and hence $m$) by the assumed upper bound on $m$. Using \eqref{Bbound}, \eqref{pear}, and that $\tau_{2k+m}$ is symmetric in its arguments, we can bound the portion of the integral in the $m$-th term over the region $\mathcal B_\ell$ by
\be
\int_{\mathcal B_\ell} \det Q^{(2k+m)}(z_1, \dots, z_k,  x_1, \dots, x_k, y_1, \ldots, y_m)\, d\bs x\,  d\bs y \le C e^{-cn},
\ee 
after increasing the value of $C$.

Finally, we consider the integral over $\mathcal 
A_\ell$ where $Q^{(2k+m)}$ is positive-definite.
We observe that the integrand here can be estimated by
\begin{equation}\label{eq:sec4_3p1_integrand}
    \one_{\mathcal A_{\ell}} \det Q^{(2k+m)}(z_1, \dots, z_k,  \bs x , \bs y) 
    \le 
    \one_{\mathcal A_{\ell}}  \det Q^{(2k)}(z_1, \dots, z_k, \bs x)  \left( \prod_{i=1}^m \det Q^{(1)} (y_i) \right) 
\end{equation}
by iteratively applying \Cref{l:pd} since 
the matrices $M_{\mathcal I}$ and $M_{\mathcal I^c}$ 
as defined in \Cref{l:pd} are positive-definite whenever 
$M$ is (since all principal minors of a 
positive-definite matrix are positive-definite).
By \Cref{l:23}, we have 
\begin{equation}\label{tiger}
    \left( \prod_{i=1}^m \det Q^{(1)} (y_i) \right) \le C^m.
\end{equation}
Bounding \eqref{eq:sec4_3p1_integrand} using the estimates \eqref{eq:sec4_3p1_integrand} and \eqref{tiger}, we find
\begin{multline}
\left| 
\int_{((z_1 + B_n) \cup \ldots \cup (z_k + B_n))^m} 
\int_{z_1 + B_n}  \ldots \int_{z_k + B_n}  \one_{\mathcal A_{\ell}} n^{2k+m} \det Q^{(2k+m)}(z_1, \dots, z_k,  \bs x , \bs y)\, d\bs x\,  d\bs y \right| \\
\le Cn^{m} \cdot n^{-3 m /2} \int_{z_1 + B_n}  \ldots \int_{z_k + B_n} \one_{\mathcal A_{\ell}}  n^{2k} \det Q^{(2k)}(z_1, \dots, z_k, \bs x) \\
\le Cn^{m}  \cdot n^{-3m/2} = C n^{- m/2},\label{aspen}
\end{multline}
where the constant $C$ depends on $m$ and $\Omega$, and changes at each appearance. The bound in \eqref{aspen} is $o(1)$, which completes the proof.
\eeb
\end{proof}

\begin{proof}[Proof of \Cref{l:integration}(2)]
We retain the notations from the previous proof. Our goal is to prove a uniform upper bound on \eqref{pie}. The same argument that gave \eqref{largem} shows that the that the sum of all terms in \eqref{pie} with $m \ge k+5$ is  $O(n^{-1/2})$. 
For the terms with $m < k + 5$, the previous Borel--Cantelli argument no long suffices, as it is not uniform in $n$. Therefore, we reason as follows. Define $\mathcal X_k = \mathcal X_k^{(n)} \subset \Omega^k$ by 
\be
\mathcal X_k = \mathcal C^c_k \cap \left( \bigcap_{m=1}^{k+4} \mathcal C_{k,m}^c \right).
\ee
For $\bs z \in \mathcal X_k$,  the same argument that gave \eqref{aspen} shows the the terms with $0 \le m \le k+4$ in \eqref{pie} are uniformly bounded by a constant $C>0$ (which depends on $k$ and $\Omega$). By \eqref{l:pdset} and \eqref{l:pdset2}, there exist constants $C, c > 0$ (depending on $k$) such that 
\be\label{Xmeas0}
\P( \mathcal X^c _k)  \le C e^{-cn}.
\ee
Then with the choice $\mathcal W_n = \mathcal X^{(n)}_k$, we have established \eqref{W1} and \eqref{W2}. For \eqref{W3}, we note that \eqref{pfcorr} and \eqref{l:23} together show that that there exists a constant $\tilde C>0$ such that for every $k \in \N$, we have 
\be\label{globalbound}
\sup_{z_1, \dots, z_k \in \Omega} \tau_{k} (z_1, \dots, z_k) \le \tilde C^k n^k.
\ee 
We emphasize that $\tilde C$ does not depend on $k$. Using \eqref{globalbound} in the terms with $0 \le m \le k+4$ in \eqref{pie} shows that their sum is bounded by
\be
C n^{3k + 4} \le C n^{ 8 k}
\ee 
for some $C(k) > 0$, where we use $k\ge 1$. This shows that \eqref{W3} holds and completes the proof.
\end{proof}

\begin{proof}[Proof of \Cref{l:integration}(3)]

Consider an arbitrary point $(z_1, \dots, z_k) \in \overline{\Psi}_k$. 
We begin by defining an equivalence relation $\sim$ on the set $\{1, \dots, k\}$ as follows. This equivalence relation is determined by two conditions. First, we have $i \sim j$ if $z_i - z_j \in B_n$ or $z_j - z_i \in B_n$. Second, we also have ${i} \sim j$ if there exists a sequence $i_1, \dots, i_s$ of indices such that $i_1 = i$, $i_s = j$, and $z_{i_m} \sim z_{m+1}$ for all $1\le m < s$. 
Let $p$ denote the number of equivalence classes under $\sim$. For each $r \in \N$ with $1 \le r \le p$, let $i_r$ denote the index in the $r$-th equivalence class such that $z_{i_r}$ is the maximal element in this equivalence class under $\prec$. (The set of $\bs z \in \overline{\Psi}_k$ where there is not a unique maximal element in each class has measure zero, and therefore can be neglected.)

By the definition of $\tilde \xi^{(n)}$ and \Cref{l:detapprox}, there exist $C(\Omega), c(\Omega)>0$ such that
\begin{align}
\tilde \tau_k &(z_1, \dots, z_k) \\
&\le 
\int_{z_{i_1}+B_n} 
\dots
\int_{z_{i_p}+B_n} 
\tau_{k+p} ( z_1, \dots, z_k, x_1, \dots, x_p)\, d x_1 \dots d x_p \\
& \le n^{k+p}
\int_{z_{i_1}+B_n} 
\dots
\int_{z_{i_p}+B_n} 
\det Q^{(k+p)} ( z_1, \dots, z_k, x_1, \dots, x_p)\, d x_1 \dots d x_p + C e ^{-cn}.\label{orange}
\end{align}
We define $\mathcal Y_k \subset \Omega^k$ by 
\be
\mathcal Y_k = \mathcal C_k^c \cap \left( \bigcap_{m=1}^{2k} \mathcal C_{k,m}^c\right).
\ee 
From \eqref{l:pdset} and \eqref{l:pdset2}, there exist constants $C, c > 0$ such that 
\be\label{Xmeas}
\P( \mathcal Y_k) > 1 - C e^{-cn}.
\ee
Note that by \Cref{l:23} and \Cref{l:detapprox}, there exists a constant $C(\Omega)>0$ such that 
\be\label{naive}
\big| \det Q^{(k+p)} ( z_1, \dots, z_k, x_1, \dots, x_p)\big|
\le C^{k+p} \le C^{2k}, 
\ee 
since $p < k$ by the definitions of $p$ and $\overline{\Psi}_k$.
Then \eqref{Xmeas} and \eqref{naive} together imply that 
\be\label{Xcnegligible}
\lim_{n\rightarrow \infty}
\int_{\mathcal Y^c_k} \tilde \tau_k (z_1, \dots, z_k)
=0.
\ee 
It remains to consider the analogous integral over the region $\mathcal Y_k \cap \overline{\Psi}_k$.

Returning to \eqref{orange}, we let $\mathcal A_{k,p} \subset \mathcal Y_k$ denote the set of points $(z_1, \dots, z_{k+p})$ such that $Q^{(k+p)}(z_1,\dots, z_{k+p})$ is positive-definite, and set $\mathcal B_{k,p} = \mathcal Y_k \setminus \mathcal A_{k,p}$. 
On $\mathcal B_{k,p}$, we have  $|\tau_{k+p}(\bs z)| \le C e^{-cn}$ uniformly for all $\bs z$ and all $ p <k$, so the contribution from $\mathcal B_{k,p}$ is negligible by the same argument that gave \eqref{Xcnegligible}. 

On $\mathcal A_{k,p}$, we have after using \Cref{l:pd} multiple times that 
\begin{multline}
n^{k+p} \int_{z_{i_1}+B_n} 
\dots
\int_{z_{i_p}+B_n} \one_{\mathcal A_{k,p}} \det 
Q^{(k+p)} ( z_1, \dots, z_k, x_1, \dots, x_p)\, d x_1 \dots d x_p\\
\le 
\prod_{j \neq i_1, \dots i_p} 
n \det Q^{(1)} (z_j)
\cdot 
\prod_{j=1}^p \int_{z_{i_j} + B_n} n^2 \det Q^{(2)} (z_{i_j}, x_j )\, d x_j.\label{blueberry}
\end{multline}
It follows from the estimate \eqref{turkey2} above that 
\be\label{blueberry1}
\sup_{z_1, \dots, z_p \in \Omega}
\left| \prod_{j=1}^p \int_{z_j + B_n} n^2 \det Q^{(2)} (z_j, x_j )\, d x_j \right| \le C^p.
\ee
We also have, using \eqref{pfcorr}, \Cref{r:23}, and \Cref{l:24}, that
\be\label{blueberry2}
\sup_{z_1, \dots, z_{k-p}}
\left|
\prod_{j =1}^{k-p}
n \det Q^{(1)} (z_j)
\right| \le (C n)^{k-p}.
\ee
For a given $p$, the measure of the region of integration in \eqref{blueberry} is bounded by $ C_p n^{-3(k-p)/2}$.  Then inserting \eqref{blueberry1} and \eqref{blueberry2} into the right-hand side of \eqref{blueberry}, we find that the integral is bounded is $O(n^{-k/2 + p/2})$. Since $k > p$ by the definition of $\overline{\Psi}_k$, this finishes the proof.
\end{proof}

\section{Proofs of Auxiliary Lemmas}\label{s:auxiliary}

The following proof is similar to the proof of \cite[Lemma 3.3]{goel2023central}. We give it here for the reader's convenience. 
\begin{proof}[Proof of \Cref{l:detapprox}]
We recall from \eqref{pfcorr} that  $\rho_k(\sqrt{n} z_1, \dots , \sqrt{n} z_k)$ can be written explicitly as a Pfaffian.  Using \Cref{l:23}, all terms in this expansion involving a factor $D_n$ or $I_n$ are exponentially small. We conclude that  
\begin{equation*}
\sup_{z_1,\dots, z_k \in \Omega}
\big|
n^k \rho_k(\sqrt{n} z_1, \dots, \sqrt{n} z_k)
-
n^k \pf\big(\tilde K(\sqrt{n} z_i, \sqrt{n} z_j)\big)_{1 \leq i, j \leq k},
\big|
\le c^{-1} e^{-cn},
\end{equation*}
where $(\tilde K(z_i, z_j))_{1 \leq i, j \leq k}$ is a $2k \times 2k$ matrix composed of the $2\times 2$ blocks
\begin{align*}
    \tilde K(z_i, z_j)=\begin{pmatrix}
0  & S_{n}\left(z_i, z_j\right) \\
-S_{n}\left(z_j, z_i\right) & 0 
\end{pmatrix}.
\end{align*}
The conclusion follows after noting that \Cref{l:24} implies 
\begin{equation*}
\pf\big(\tilde K(\sqrt{n} z_i, \sqrt{n} z_j)\big)_{1 \leq i, j \leq k}
= \det Q^{(k)}(z_1, \dots, z_k)
.
\end{equation*}
\end{proof}

\begin{proof}[Proof of \Cref{l:close}]
By \eqref{erf}, for all $z,w \in \Omega$, we have the asymptotic expansion
\begin{equation}\label{SNasymptotic}
    S_n ( \sqrt{n} z , \sqrt{n} w) = 
    U(z,w) s_{n}(n z \bar w)
    \left( 1 + O(n^{-1}) \right).
\end{equation}
By \cite[Lemma 4.1]{goel2023central}, 
there exists a constant $C(\tilde d_\Omega)>0$ such that for all $z,w\in \Omega$,
\begin{equation}\label{e:original}
s_{n}(n z \bar w)
=
1 -
\frac{1}{\sqrt{2 \pi n}}
\frac{( z \bar w  e^{1-z \bar w})^n}{1-z\bar w}
\big( 1 + R(z\bar w;n) \big), \qquad  \big|R(z\bar w; n)\big|\leq C n^{-1}.
\end{equation}
Considering the case $z=w$, the exponential factor becomes
\begin{equation*}
|z|^2 e^{1-|z|^2} = \exp\big( 1 - |z|^2 + \log(|z|^2)  \big).
\end{equation*}
There exists a constant $\eps(\Omega) > 0$ such that for all $z \in \Omega$, we have $\eps < |z|^2 < 1 - \eps$. Then, using concavity of the logarithm, we find that there exists $\delta(\Omega) > 0$ such that for all $z \in \Omega$,
\begin{equation*}
 1 - |z|^2 + \log(|z|^2) < - \delta.
\end{equation*}
By continuity, it follows that there exists $n_0(r, \Omega) > 0$ such that for all $z,w\in \Omega$ such that $|z-w| < r n^{-3/4}$, and all $n > n_0$, 
\begin{equation}\label{e:prev}
\Re\big( 1 - z\bar w - \log (z \bar w) \big) < - \delta/2 , \qquad 
| z \bar w  e^{1-z \bar w} |^n \le e^{ -\delta n /2 }.
\end{equation}
Inserting \eqref{e:prev} into \eqref{e:original} shows that 
\begin{equation}\label{e:penultimate}
s_{n}(n z \bar w) = 1 + O(n^{-1}),
\end{equation}
after noting that $(1-z\bar w)^{-1}$ is uniformly bounded above for all $z,w \in \Omega$. Then inserting \eqref{e:penultimate} into \eqref{SNasymptotic} completes the proof (after recalling that the left-hand side of \eqref{SNasymptotic} is $O(1)$, by \Cref{l:23}).
\end{proof}
\begin{proof}[Proof of \Cref{p:poisson}]
Note that $X_n \rightarrow \chi(J)$ in distribution is implied by the statement that $\E[X^r] \rightarrow \E[X^r]$ for all $r \in \N$, by \cite[Theorem 30.1]{billingsley2017probability} and \cite[Theorem 30.2]{billingsley2017probability}.\footnote{For the application of \cite[Theorem 30.1]{billingsley2017probability}, we recall that the moment generating function of a Poisson distribution with rate $\lambda$ is given by $\exp(\lambda( e^t  -1 ))$, whose power series converges for all $t \in \R$.} Convergence of all  moments $\E[\chi^{(n)}(J)^r]$ is implied by the convergence of the factorial moments on the left-hand side of \eqref{factorialmoments}, since the factorial moments of order at most $r$ determine the moment $\E[\chi^{(n)}(J)^r]$, and this completes the proof after recalling the standard fact that the $r$-th factorial moment of a Poisson random variable with rate $\lambda$ is $\lambda^r$ (which follows from a straightforward computation using the probability density function). 
\end{proof}
\begin{proof}[Proof of \Cref{p:bab}]
This follows directly from \cite[Theorem 4.15]{kallenberg2017random}; see the comment immediately before its statement. Note that our hypothesis on $\mu$ implies that $\chi$ is simple, as required by theorem. We are also using that the collection of bounded Borel subset of $\R$ forms a \emph{dissecting ring} according to the definition of \cite[p. 24]{kallenberg2017random}.
\end{proof}

For the proof of \Cref{l:pdset}, we need the following lemma.

\begin{lemma}\label{l:integrate_out_lemma}
    Fix an admissible domain $\Omega$ and $k,m, n \in \N$ such that $m < k < n$. Then
    \begin{equation}\label{51conclude}
        \tau_m (z_1, \ldots, z_m) =  \frac{(n-k)!}{(n-m)!}  
        \int_{\Omega^{k-m}} \tau_k (z_1, \ldots, z_k) \, dz_{m+1} \dots dz_k.
    \end{equation}
\end{lemma}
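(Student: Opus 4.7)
The plan is to reduce the identity to the standard combinatorial relation between correlation functions of an $n$-point process, using the defining property \eqref{corrdef} of the $\rho_k$.

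First, I would test both sides against an arbitrary bounded Borel function $g\colon \Omega^m \to \R$. Using the definition \eqref{taukdef} of $\tau_k$, the change of variables from \Cref{r:23}, and \eqref{corrdef}, the left side becomes
\begin{equation*}
\int_{\Omega^m} g(z_1,\dots,z_m)\,\tau_m(z_1,\dots,z_m)\,dz \;=\; \E\!\left[\sum_{(i_1,\dots,i_m)\in\mathcal{I}_m} g(\lambda_{i_1},\dots,\lambda_{i_m})\prod_{j=1}^m \one_\Omega(\lambda_{i_j})\right],
\end{equation*}
and, after Fubini, the right side becomes
\begin{equation*}
\frac{(n-k)!}{(n-m)!}\,\E\!\left[\sum_{(i_1,\dots,i_k)\in\mathcal{I}_k} g(\lambda_{i_1},\dots,\lambda_{i_m})\prod_{j=1}^k \one_\Omega(\lambda_{i_j})\right].
\end{equation*}

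Second, I would expand the sum over $\mathcal{I}_k$ by decomposing each $k$-tuple into its initial $m$-tuple $(i_1,\dots,i_m)\in\mathcal{I}_m$ together with an ordered extension $(i_{m+1},\dots,i_k)$ of distinct indices drawn from $\{1,\dots,n\}\setminus\{i_1,\dots,i_m\}$. The number of such extensions is the falling factorial $(n-m)(n-m-1)\cdots(n-k+1) = (n-m)!/(n-k)!$, which cancels the prefactor $(n-k)!/(n-m)!$ on the right and, modulo the indicator issue discussed next, reproduces the expression on the left side.

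The main obstacle is that the extra indicators $\prod_{j=m+1}^k \one_\Omega(\lambda_{i_j})$ on the right restrict the contributing extensions to those whose eigenvalues all lie in $\Omega$, so the naive count above should depend on the random variable $N_\Omega := \#\{i:\lambda_i\in\Omega\}$ rather than on $n$. The key point to finish the proof is to lift the identity to the full $n$-point eigenvalue process of $W_n$: since $\Omega$ is admissible, $\overline{\Omega}\subset\H$ avoids the real axis, so on $\Omega^k$ the restricted correlation function $\tau_k$ agrees (up to the $n^k$ scaling) with the $k$-point correlation function of the full $n$-point process evaluated at $\sqrt n z_1,\dots,\sqrt n z_k$. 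The standard integration identity for correlation functions of an $n$-point process then yields the deterministic factor $(n-k)!/(n-m)!$ directly from the fixed total count of $n$ eigenvalues, and matching coefficients with the left side produces \eqref{51conclude}.
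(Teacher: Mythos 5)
Your first two steps are exactly the paper's argument: test both sides against a bounded Borel function on $\Omega^m$, rewrite via \eqref{corrdef}, \eqref{taukdef}, and \Cref{r:23}, and split each distinct $k$-tuple of indices into its initial $m$-tuple plus an ordered extension, counted by $(n-m)!/(n-k)!$. You have also put your finger on the genuine subtlety, which the paper's own proof does not engage with (it simply performs the unrestricted count): because $\tau_k$ carries the indicator $\one_\Omega$ in \emph{every} variable and the integral in \eqref{51conclude} runs only over $\Omega^{k-m}$, the number of admissible extensions of a fixed $m$-tuple is $(N_\Omega-m)(N_\Omega-m-1)\cdots(N_\Omega-k+1)$, where $N_\Omega=\#\{i:\lambda_i\in\Omega\}$ is random, not the deterministic falling factorial.

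Your proposed resolution, however, does not close this gap. The standard identity $\rho_m=\frac{(n-k)!}{(n-m)!}\int\rho_k$ for an $n$-point process requires integrating the extra $k-m$ variables over the \emph{entire} state space; observing that $\tau_k$ agrees on $\Omega^k$ with the rescaled correlation function of the eigenvalue process does not change the fact that your integration domain is $\Omega^{k-m}$, so the extension count is still governed by $N_\Omega$. Moreover, the complex-eigenvalue process of $W_n$ is not an $n$-point process: the number of non-real eigenvalues is random (GinOE matrices have real eigenvalues), so even integrating over all of $\C\setminus\R$ would not produce the factor $(n-m)!/(n-k)!$ without bringing in the real--complex mixed correlations, which you do not. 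Indeed, the exact equality cannot be rescued: for $m=1$, $k=2$, testing against $f\ge 0$ shows that $\int_\Omega\tau_2(z_1,z_2)\,dz_2$ corresponds to $\E\big[\sum_{\lambda_i\in\Omega}f(\lambda_i)(N_\Omega-1)\big]$, and since eigenvalues in $\Omega\subset\H$ come with conjugates outside $\H$ we have $N_\Omega\le n/2$, so this is strictly smaller than $(n-1)\E\big[\sum_{\lambda_i\in\Omega}f(\lambda_i)\big]$ whenever the latter is positive. What your decomposition does prove, by bounding the number of admissible extensions by $(n-m)!/(n-k)!$, is the one-sided estimate
\be
\int_{\Omega^{k-m}}\tau_k(z_1,\dots,z_k)\,dz_{m+1}\dots dz_k \;\le\; \frac{(n-m)!}{(n-k)!}\,\tau_m(z_1,\dots,z_m),
\ee
valid for almost every $(z_1,\dots,z_m)\in\Omega^m$ (hence everywhere, by continuity of the kernels). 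This inequality is all that is used of \Cref{l:integrate_out_lemma} in the proof of \Cref{l:pdset}, so the correct move is to state and prove the inequality rather than the equality; the paper's own proof, which takes your route but with the naive count, is subject to the same correction.
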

\begin{proof}
Fix a bounded Borel function $f: \Omega^m \to \R$, and define $\phi : \Omega^k \to \R$ by $\phi(z_1, \ldots, z_k) = f(z_1, \ldots, z_m)$. We know that $f$ satisfies the equality in \eqref{corrdef} for $\tau_m$ and that $\phi$ satisfies
    the analogous equality for $\tau_k$. Analyzing the term on the right-hand side of \eqref{corrdef}, we have that
    \begin{align}
        \sum_{(i_1, \ldots, i_k) \in \mathcal{I}_k} \phi(w_{i_1}, \ldots, w_{i_k}) &= \sum_{(i_1, \ldots, i_k) \in \mathcal{I}_k} f(w_{i_1}, \ldots, w_{i_m})\\
        &= \sum_{(i_1, \ldots, i_m) \in \mathcal{I}_m}  \frac{(n-m)!}{(n-k)!} f(w_{i_1}, \ldots, w_{i_m}).
    \end{align}
   Using \eqref{corrdef}, this implies that
    \begin{multline}\label{rhs11}
        \int_{\Omega^k} \phi(z_1, \ldots, z_k) \tau_k(z_1, \ldots, z_k)\, dz_1\ldots dz_k = \\ \frac{(n-m)!}{(n-k)!} \int_{\Omega^m} f(z_1, \ldots, z_m) \tau_m (z_1, \ldots, z_m) \, dz_1 \ldots dz_m.
    \end{multline}
    Additionally, by Fubini's theorem and the definition of $\phi$,
    \begin{multline}
        \int_{\Omega^k} \phi(z_1, \ldots, z_k) \tau_k(z_1, \ldots, z_k) \, dz_1 \ldots dz_k = \\ 
         \int_{\Omega^m} f(z_1, \ldots, z_m) \left( \int_{\Omega^{k-m}} \tau_k (z_1, \ldots, z_k) \, dz_{m+1} \ldots dz_k\right) dz_1 \ldots dz_m.\label{rhs21}
    \end{multline}
Since $f$ was arbitrary, we conclude from comparing the right-hand sides of \eqref{rhs11} and \eqref{rhs21} that \eqref{51conclude} holds.
\end{proof}

\begin{proof}[Proof of \Cref{l:pdset}]
Define
\be
\mathcal{A} = \{ {\bf{z}} \in \Omega^k : Q^{(k)}({\bf{z}}) \text{ is positive definite} \}
\ee
and let $\mathcal{D} = \Omega^k \setminus \mathcal{A}$. We let $M_i$ denote the $i$-th
    leading principal minor of $Q^{(k)}$, obtained by removing the last $k-i$ rows and
    columns. For $1 \leq j \leq k$, we define
    \begin{equation}
        \mathcal{D}_j = \{ \bs z \in \Omega^k \mid \det M_j(\bs z) \le 0 \land  \forall \ell < j \, \det M_\ell(\bs z) >  0 \}.
    \end{equation}
    In other words, $\mathcal D_j$ is the set of $\bs z$ where the smallest principal minor with a non-positive determinant is the $j$-th one. 
    By Sylvester's criterion (\cite[Theorem 7.2.5(b)]{horn2012matrix}), we may
    write $\mathcal{D}$ as the  disjoint union of $\mathcal{D}_j$ for
    $j = 1, \ldots, k$:
    \begin{equation}
        \mathcal{D} = \mathcal{D}_{k} \sqcup \mathcal{D}_{k-1} \sqcup \ldots \sqcup \mathcal{D}_1.
    \end{equation}

    Note that membership in $\mathcal D_m$ is determined by the first $m$ coordinates, meaning that if $(z_1, \dots, z_k) \in \mathcal D_m$, then  for all $(w_{m+1}, \dots, w_k) \in \Omega^{k-m}$, we have 
    \be (z_1, \dots, z_m, w_{m+1}, \dots , w_k )  \in \mathcal D_m.
    \ee 
With this in mind, we define 
\be
\mathcal D'_m = \{ (z_1, \dots, z_m )\in \Omega^m : 
\exists (z_{m+1}, \dots, z_k) \in \Omega^{k-m} \text{ with } (z_1, \dots, z_k) \in \mathcal D_m 
\}.
\ee

    Fix $m \le k$. By \Cref{l:detapprox}, there exists $c(k, d_\Omega)>0$ such that for all $(z_1, \ldots, z_m) \in \mathcal{D}'_m$,  we have
    \begin{equation}\label{taunegative}
        \tau_m (z_1, \ldots, z_m) \leq c^{-1} e^{-cn},
    \end{equation}
    because $\tau_m \ge 0$ and $\det Q^{(m)}(z_1,\dots, z_m) \leq 0$ on $\mathcal D_m$ (since $Q^{(m)}$ is the $m$-th principal minor of $Q^{(k)}$). 
    
    By \Cref{l:integrate_out_lemma}, it follows that there exists a constant $\hat c  > 0$ such that $c > \hat c$, and for all $(z_1, \ldots, z_m) \in \mathcal{D}'_m$, 
    \begin{align}
        \int_{\Omega^{k-m}} \tau_k (z_1, \dots, z_m, w_{m+1}, \dots , w_k )  \, dw_{m+1} \ldots dw_k &= \frac{(n-m)!}{(n-k)!} \tau_m(z_1, \ldots, z_m) ,
         \\ &\leq n^k \tau_m(z_1, \ldots, z_m)\\
          & \le \hat c^{-1} e^{-\hat c n}. \label{purple}
    \end{align}
    
    Now, if we define
    \begin{equation}
        D(z_1, \ldots, z_m) = \{(z_{m+1}, \ldots, z_k) \in \Omega^{k-m}:  \tau_k ( z_1, \ldots, z_k) \geq \hat c^{-1} e^{-\hat c  n /2 }\},
    \end{equation}
    then by \eqref{purple} and since $\tau_k \ge 0$, we have for all $(z_1,\dots, z_m) \in \Omega^m$ that 
    \begin{equation*}
        \int_{D(z_1, \ldots, z_m)}\tau_k (z_1, \dots, z_m, w_{m+1}, \dots , w_k )  \, dw_{m+1} \ldots dw_k  \leq \hat c^{-1} e^{- \hat c n},
    \end{equation*}
       which implies by the definition of $D(z_1, \ldots, z_m)$ that 
    \begin{equation*}
        \mu(D(z_1, \ldots, z_m)) \leq e^{- \hat c n / 2}.
    \end{equation*}
    Define $\mathcal C_k^{(m)} \subset \mathcal D_k$ by 
    \be 
    \mathcal C_k^{(m)}  = \{ (z_1, \dots , z_k) \in \mathcal D_k :
    \tau_k (z_1, \ldots, z_k) \ge \hat c^{-1} e^{- \hat c n/2}\}.
    \ee 
    Then 
    \be
    \mu\big(\mathcal C_k^{(m)} \big) =  \int_{(z_1, \dots, z_m) \in \mathcal D_m'} \int_{D(z_1, \ldots, z_m)} \, d \bs z \le | \Omega |^m  \hat c^{-1} e^{-\hat c n} \le  2^m \hat c^{-1} e^{-\hat c n} .
    \ee
    Since $m \le k$, we deduce the existence of a constant $\tilde c > 0$ such that $\hat c/2  > \tilde c$ and 
    \be\label{777}
    \sum_{m=1}^k
    \mu \big ( \mathcal C_k^{(m)} \big)  \le \tilde c^{-1} e^{- \tilde c n}. 
    \ee
    Then the desired conclusion holds after setting $c_k = \tilde c$ and $C_k = \tilde c^{-1}$, since with these choices, we have 
    \be
    \mathcal C_k \subset \bigcup_{m=1}^k  \mathcal C_k^{(m)},
    \ee
    and hence 
    \be
    \mu( \mathcal C_k) 
    \le \mu\left( \bigcup_{m=1}^k  \mathcal C_k^{(m)} \right) \le \sum_{m=1}^k
    \mu \big ( \mathcal C_k^{(m)} \big) \le \tilde c^{-1} e^{- \tilde c n}
    \ee
by \eqref{777} and a union bound.
\end{proof}

\end{document}